\newtheorem{theorem}{Theorem}[section]
\newtheorem{lemma}[theorem]{Lemma}
\newtheorem{proposition}[theorem]{Proposition}
\newtheorem{definition}[theorem]{Definition}
\newtheorem{corollary}[theorem]{Corollary}
\newtheorem{example}[theorem]{Example}
\def\to{\rightarrow}
\def\f{\mathfrak}
\def\c{\mathcal}
\def\b{\mathbf}
\def\r{\mathrm}
\def\ot{\otimes}
\begin{document}
\baselineskip17pt
\title[Metric operator fields]{Banach Algebras Associated to Metric Operator Fields}
\author[M.M. Sadr]{Maysam Maysami Sadr}
\address{Department of Mathematics,
Institute for Advanced Studies in Basic Sciences (IASBS),
Zanjan, Iran}
\email{sadr@iasbs.ac.ir}
\subjclass[2010]{46L05. 54E70. 46N50.}
\keywords{C*-algebras, Banach algebra}
\begin{abstract}
Motivated by noncommutative geometry and quantum physics, the concept of `metric operator field'
is introduced. Roughly speaking, a metric operator field is a vector field
on a set with values in self tensor product of a bundle of C*-algebras, satisfying properties similar to an ordinary metric (distance function).
It is proved that to any such object there naturally correspond a Banach *-algebra
that we call Lipschitz algebra, a class of probabilistic metrics, and (under some conditions)
a (nontrivial) continuous field of C*-algebras in the sense of Dixmier.
It is proved that for metric operator fields with values in von Neumann algebras the associated Lipschitz algebras are dual Banach spaces,
and under some conditions, they are not amenable Banach algebras. Some examples and constructions are considered.
We also discuss very briefly a possible application to quantum gravity.
\end{abstract}
\maketitle
\section{Introduction}
Let $X$ be a topological space. Suppose that for every $x\in X$ we are given a C*-algebra $A_x$. Then a `continuous field of C*-algebras
on $X$' \cite{Dixmier1} is a C*-algebra $\f{A}$ of functions $f$ on $X$ with $f(x)\in A_x$ which satisfy some specific continuity properties.
Following Fell \cite{Fell1}, we call such functions `operator fields'.
In literatures the data $\c{A}_X:=\{A_x\}_{x\in X}$ are commonly considered as a `bundle of C*-algebras on $X$' and any operator field in $\f{A}$
as a continuous section of that bundle. But in this paper we would like to consider $\c{A}_X$ as a `generalized value system of numbers'
(rather than ordinary value system of complex numbers) and $\f{A}$ as a `generalized function space on $X$'.
There are many types of functions on a set that can be reformulated in order to take generalized values. One of them is the type of
distance functions or metrics. Indeed, this paper is devoted to study metrics with generalized values.
An ordinary metric on a set $X$ is a function on
$X\times X$, with values in the ordinary system of numbers, which satisfies appropriate properties of a distance function.
Suppose that we have a generalized system $\c{A}_X$ of values and try to formulate a concept for `metric on $X$ with values in $\c{A}_X$'.
Then it is natural to consider this metric to be a function $D$ on $X\times X$ such that for every $x,x'$,
$D(x,x')\in A_x\ot A_{x'}$ and such that $D$
satisfies the analogues of positivity, triangle inequality and symmetry of an ordinary metric.
(Another natural possibility which we do not study in this paper is to consider $D$ as a function with values in free products $A_x\star A_{x'}$.)
In this note, we give a set of axioms for such a metric called `metric operator field' (or `mof' for abbreviation). We associate to mofs
some classes of Banach and C*-algebras and a class of probabilistic metric space.
To any ordinary (compact) metric space $X$ there associate a Banach *-algebra $\b{Lip}(X)$ of Lipschitz functions,
and an abelian C*-algebra $\b{C}(X)$ of continuous functions on $X$. It is well-known that $\b{Lip}(X)$ is uniformly dense in $\b{C}(X)$.
We show that to any mof $D$ there associate a Banach *-algebra $\b{Lip}(D)$ of `Lipschitz operator fields',
and a C*-algebra $\b{C}(D)$ of `continuous operator fields'. The C*-algebra $\b{C}(D)$ is defined to be the
uniform closure of $\b{Lip}(D)$ in the space of bounded operator fields. We remark the reader that $\b{C}(D)$,
in contrast to the classical case, can not be directly defined from $D$ or its `topology'. Also, we remark that the behavior and analysis
of Lipschitz operator fields, because of their generalized values and more basically the appearance of tensor product in their definition
(see Section \ref{s5-05}), is very different from ordinary Lipschitz functions (that in our theory are called `trivial Lipschitz operator fields').
One of our main results (Theorem \ref{t4-271837}) states that (under some conditions) $\b{C}(D)$ is a
nontrivial continuous field of C*-algebras in the sense of Dixmier.
Indeed, one of our motivations is to supply a source of (nontrivial) continuous fields of C*-algebras.
In this note, we also investigate some basic properties of the new class of Banach algebras $\b{Lip}(D)$,
but postpone the more study of the new class of C*-algebras $\b{C}(D)$ to elsewhere.

The study of mofs was also motivated by mathematical physics: A mof $D$ on $X$ with values in $\c{A}_X$
can be interpreted as a `bilocal quantum field of distance' where $X$ denotes (physical) space (or more generally space-time) and $A_x$ denotes
the algebra of (bounded) observables that can be measured at $x$ or in an infinitesimal region of space-time around $x$. (In C*-algebraic approach
to local quantum field theory \cite{Haag1,Horuzhy1} there is associated to any region $\c{O}$ of space-time a C*-algebra $A_\c{O}$. Then $A_x$ can be
defined to be the inverse limit $\underleftarrow{\r{lim}}A_\c{O}$ over all regions $\c{O}$ containing $x$.) Indeed, this view point
to distance in quantum theory may be applied in very simple (or basic) models of Quantum Gravity. For instance, in \cite{AlvarezCespedesVerdaguer1}
a quantum distance field has been constructed in the framework of path integrals. (This means that the mentioned quantum field is mathematically
described by a probability measure on the set of all possible ordinary metrics on space-time.) Then it \emph{seems} that the operator valued
version of that quantum field has the form of a mof. (This latter quantum field can be constructed from the later one in a way
that, for instant, explained in \cite[Section 6.1]{GlimmJaffe1}.) Also somewhat similar arguments as in \cite{AlvarezCespedesVerdaguer1}
have been applied in \cite{ErberSchweizerSklar1} in order to construct some types of `probabilistic metrics'. Analogously we will show
that from any mof one can extract natural probabilistic metrics.
We mention the readers that there are also strong relations between mofs and the concept of `noncommutative metric' or `quantum metric'
in Noncommutative Geometry. See \cite{Martinetti1,Sadr1,Sadr2,Sadr7} and references therein. In this view, a mof may be considered
as a noncommutative metric on an ordinary space. We have plan to investigate the structure of mofs as quantum fields
and noncommutative metrics elsewhere.

The plan of the paper is as follows. In Section \ref{s5-012310}, we introduce our main concept `mof' and we consider some examples
and related constructions. We also show that to any mof there are associated probabilistic metrics in a natural way.
In Section \ref{s5-05}, we define `Lipschitz operator fields' and a Banach *-algebra called Lipschitz algebra associated to any mof.
These are analogues of Lipschitz functions and Lipschitz algebra associated to an ordinary metric space.
In Section \ref{s5-031303}, we consider some basic properties and examples of Lipschitz operator fields. In Section \ref{s5-031305},
we show that the Lipschitz algebra associated to any mof on a bundle of von Neumann algebras is a dual Banach space.
This generalizes a famous result in the case of ordinary metric spaces. We also mention a little result on amanability of Lipschitz algebras.
In Section \ref{s4-26}, we show that associated to any mof there is a natural continuous field of C*-algebras which can be considered
as the algebra of continuous operator fields.

\textbf{Notations.} Throughout any algebra has unit and any homomorphism preserves units.
Topological dual of a Banach space $E$ is denoted by $E^*$.
Spectrum of an element $a$ in an algebra is denoted by $\sigma(a)$. State space of a C*-algebra
$A$ is denoted by $\c{S}(A)$ and its center by $\c{Z}(A)$. Completed spatial tensor product of C*-algebras $A,B$ is denoted by $A\ot B$.
The notation $A\star B$ denotes free product \cite{Avitzour1} , i.e. coproduct in the category of unital C*-algebras and unit preserving homomorphisms.
For a topological space $X$ we denote by $\b{C}(X)$ the C*-algebra of continuous bounded complex valued functions on $X$.
Let $(X,d)$ be a metric space. For a function $f:X\to\mathbb{C}$ we let $\|f\|_d:=\sup_{x\neq x'}|f(x)-f(x')|/d(x,x')$.
Then $f$ is called Lipschitz (w.r.t. $d$) if $\|f\|_d<\infty$.
The space of bounded Lipschitz functions on $X$ is denoted by $\b{Lip}(d)$. This is a Banach *-algebra with point wise operations
and norm $\|\cdot\|_\b{Lip}:=\|\cdot\|_d+\|\cdot\|_\infty$. If $X$ has a distinguished point $x_0$ then $\b{Lip}_0(d)$ is the Banach
space of all Lipschitz functions $f$ with $f(x_0)=0$ and norm $\|\cdot\|_d$. (See \cite{Weaver1} for more details.)
\section{The main definition}\label{s5-012310}
Let $X$ be a set. A `bundle of Banach spaces' on $X$ is a family $\c{E}=\{E_x\}_{x\in X}$ of Banach spaces $E_x$ indexed by elements of $X$.
We often denote this data shortly by $\c{E}_X$. A `vector field' on $X$ with values in $\c{E}$ (or shortly a vector field on $\c{E}_X$)
is a map $f:X\to\cup E_x$ such that $f(x)\in E_x$ for every $x\in X$. The vector space (with point wise operations) of
all vector fields is denoted by $\b{F}(\c{E}_X)$. $\ell_\infty(\c{E}_X)$ denotes the Banach space of bounded vector fields $f$, i.e.
$\|f\|_\infty:=\sup_x\|f(x)\|<\infty$. $\ell_1(\c{E}_X)$ denotes the Banach space of absolutely summable vector fields $f$, i.e.
$\|f\|_1:=\sum_x \|f(x)\|<\infty$. A bundle $\c{A}:=\{A_x\}$ of C*-algebras on $X$ is a bundle of Banach spaces for which every $A_x$ is a C*-algebra.
In this case following \cite{Fell1} any vector field on $\c{A}_X$ is called an `operator field'. For a complex valued function $f$ on $X$
we let $\tilde{f}$ denote the operator field on $\c{A}_X$ defined by $x\mapsto f(x)1_x$ where $1_x$ denotes the unit of $A_x$.
We call $\tilde{f}$ `scalar valued operator field' associated to $f$ and $\c{A}_X$. It is clear that $\b{F}(\c{A}_X)$ is a *-algebra
and $\ell_\infty(\c{A}_X)$ is a C*-algebra with point wise operations. For two bundles $\c{A}_X,\c{B}_Y$ of C*-algebras
we denote by $\c{A}_X\ot\c{B}_Y$ the bundle $\{A_x\ot B_y\}$ of C*-algebras on the cartesian product $X\times Y$.
For an operator field $F$ on $\c{A}_X\ot\c{B}_Y$ we often write $F_{x,y}$ instead of $F(x,y)$. Analogously $\c{A}_X\star\c{B}_Y$ denotes
the bundle $\{A_x\star B_y\}$. Our main definition is as follows.
\begin{definition}\label{d1}
Let $\c{A}_X$ be a bundle of C*-algebras. Then $D\in\b{F}(\c{A}_X\ot\c{A}_X)$
is called a metric operator field (mof for abbreviation) on $\c{A}_X$ if the following conditions are satisfied.
\begin{enumerate}
\item[(i)] There is a family $\{\mu_x\}$ of states, called metric-states, with $\mu_x\in\c{S}(A_x)$
such that for every $x$, $(\mu_x\ot\mu_x)D(x,x)=0$. In particular, $D(x,x')$ is not invertible.
\item[(ii)] For $x\neq x'$, $D(x,x')$ is a positive invertible element in $A_x\ot A_{x'}$.
\item[(iii)] $D(x,x')=\f{F}D(x',x)$ where $\f{F}:A_x\ot A_{x'}\to A_{x'}\ot A_x$ denotes flip.
\item[(iv)] $\f{M}D(x,x'')\leq D(x,x')\ot1_{x''}+1_x\ot D(x',x'')$ where $\f{M}:A_x\ot A_{x''}\to A_x\ot A_{x'}\ot A_{x''}$
denotes the *-morphism that puts unit in the middle: $a_x\ot a_{x''}\mapsto a_x\ot 1_{x'}\ot a_{x''}$.
\end{enumerate}
The pair $(\c{A}_X,D)$ is called a mof space. If $D_{x,x'}\in\c{Z}(A_x)\ot\c{Z}(A_{x'})$ for every $x,x'$,
then $D$ is called central mof.
\end{definition}
Let $D$ be a mof on $\c{A}_X$. Then for any family $\{\mu_x\}$ of metric-states the positive valued function $D^\mu$ on $X\times X$, defined by $D^\mu(x,x'):=(\mu_x\ot\mu_{x'})D_{x,x'}$, is an ordinary metric on $X$. There is another ordinary metric $D^{\|\cdot\|}$ on $X$ defined by,
$$D^{\|\cdot\|}(x,x')=\Bigg\{
        \begin{array}{cc}
          \|D_{x,x'}\| & \hspace{5mm}\text{if }x\neq x' \\
           & \\
          0 & \hspace{5mm}\text{if }x=x' \\
        \end{array}
$$
It is clear that $D^\mu\leq D^{\|\cdot\|}$. Suppose that $d$ is an ordinary metric on $X$. Then the assignment $(x,x')\mapsto d(x,x')1_x\ot1_{x'}$
defines a mof on $\c{A}_X$ denoted by $\tilde{d}$ and called `scalar valued mof' associated to $d$ and $\c{A}_X$.
\begin{example}
Let $(Y,\rho)$ be a bounded metric space and $\sim$ be an equivalence relation on $Y$ with compact equivalence classes. Let $X:=Y/\sim$.
(Thus, every $x\in X$ is an equivalence class of $\sim$ such that as a subspace of $Y$ is compact.)
Consider the C*-algebra $\b{C}(x)$ of continuous functions on $x$ and suppose that for every $x$ we are given an embedding of $\b{C}(x)$
into a C*-algebra $A_x$. Then for every $x,x'\in X$ we let $D_{x,x'}\in \b{C}(x\times x')\cong\b{C}(x)\ot\b{C}(x')\subseteq A_x\ot A_{x'}$
be defined by $(y,y')\mapsto\rho(y,y')$ ($y\in x,y'\in x'$). It is easily checked that $D$ is a mof on $\c{A}_X:=\{A_x\}$. Suppose that for every $x$
$y_x$ be an element of $x$. Then the family $\{\delta_{y_x}\}$ of point-mass states is a family of metric-states for $D$.
\end{example}
\begin{example}
For every integer $n\geq1$, let $Y_n:=\{1/n\}\times[0,1/n]\subset\mathbb{R}^2$. Also, let $Y_\infty:=\{(0,0)\}\subset\mathbb{R}^2$. Suppose that
for every $n\in\mathbb{N}_\infty:=\mathbb{N}\cup\{\infty\}$, $\b{C}(Y_n)$ is embedded in a C*-algebra $A_n$.
Let $d$ denote Euclidean distance on $\mathbb{R}^2$. For every $n,n'\in\mathbb{N}_\infty$,
let $D_{n,n'}\in\b{C}(Y_n\times Y_{n'})\subseteq A_n\ot A_{n'}$ be defined by $D_{n,n'}(y,y'):=d(y,y')$. Then $D$ is a mof on
$\c{A}_{\mathbb{N}_\infty}:=\{A_n\}$. The ordinary metrics $D^{\|\cdot\|}$ and $D^\mu$ (for any family $\{\mu_x\}$ of metric-states) are equivalent
and make $\mathbb{N}_\infty$ to compact metric spaces.
\end{example}
\begin{example}
If $D_1,D_2$ are mofs on $\c{A}_X$ then $rD_1+D_2\in\b{F}(\c{A}_X\ot\c{A}_X)$ is also a mof on $\c{A}_X$ where $r$ is a positive real number.
Moreover if $D_1,D_2$ are central then $(D_1^p+D_2^p)^{1/p}$ is a mof for real number $p\geq1$.
\end{example}
If $(X,d),(Y,\rho)$ are ordinary metric spaces then $(d\times\rho)[(x,y),(x',y')]:=d(x,x')+\rho(y,y')$ defines a metric on $X\times Y$.
Analogues of this construction are considered below.
\begin{example}
Let $(\c{A}_X,D),(\c{B}_Y,R)$ be mof spaces. Let $D\times R$ denote the operator field on
$(\c{A}_X\ot\c{B}_Y)\ot(\c{A}_X\ot\c{B}_Y)$ defined by
\begin{equation*}
\begin{split}
(D\times R)_{(x,y),(x',y')}&:=D_{x,x'}\ot1_{y}\ot1_{y'}+1_{x}\ot1_{x'}\ot R_{y,y'}\\
&\in A_x\ot A_{x'}\ot B_y\ot B_{y'}
\cong (A_x\ot B_{y})\ot (A_{x'}\ot B_{y'}).
\end{split}
\end{equation*}
Similarly let $D\tilde{\times}R$ be the operator field on $(\c{A}_X\star\c{B}_Y)\ot(\c{A}_X\star\c{B}_Y)$ defined by
$$(D\tilde{\times} R)_{(x,y),(x',y')}:=D_{x,x'}+R_{y,y'}\in(A_x\star B_y)\ot(A_{x'}\star B_{y'}).$$
(Note that by definition there are canonical embeddings from $A_x,B_y$ into $A_x\star B_y$ and hence from $A_x\ot A_{x'},B_y\ot B_{y'}$ into
$(A_x\star B_y)\ot(A_{x'}\star B_{y'})$.) Then $D\times R$ and $D\tilde{\times}R$ are mofs respectively on $\c{A}_X\ot\c{B}_Y$ and $\c{A}_X\star\c{B}_Y$.
\end{example}
We now show that for a mof space $(\c{A}_X,D)$ any family $\{\mu_x\}$ of metric-states induces in a natural way a
`probabilistic metric' on $X$. Recall that a probabilistic metric space \cite{SchweizerSklar1}
is a pair $(X,\{P_{x,x'}\})$ where $X$ is a set and for every $x,x'\in X$, $P_{x,x'}$ is a Borel probability measure on $[0,\infty)$
satisfying in the following four conditions:
\begin{enumerate}
\item[(i)] $P_{x,x}=\delta_0$ where $\delta_0$ denotes point mass probability measure concentrated at $0$.
\item[(ii)] For $x\neq x'$, $P_{x,x'}\neq\delta_0$.
\item[(iii)] $P_{x,x'}=P_{x',x}$.
\item[(iv)] If for real numbers $r,s\geq0$, $P_{x,x'}[0,r]=1$ and $P_{x',x''}[0,s]=1$ then $P_{x,x''}[0,r+s]=1$.
\end{enumerate}
Let $A$ be a C*-algebra, $a$ be a positive element of $A$, and $\mu\in\c{S}(A)$. We denote by $P(a,\mu)$ the Borel probability measure on
$[0,+\infty)$ induced by spectral measure of image of $a$ in GNS representation associated to $\mu$. Alternatively $P(a,\mu)$
can be constructed as follows. Consider $A$ as a subalgebra of its enveloping von Neumann algebra $A^{**}$ \cite{Dixmier1}. Then
$a$ has a canonical spectral measure $E_a$ with values in projections of $A^{**}$, and for every Borel subset $G$ of $[0,+\infty)$
we have $P(a,\mu)(G)=\mu E_a(G)$. Note that for a closed set $G$ we have $P(a,\mu)(G)=1$ iff $\sigma_\mu(a)\subseteq G$ where
$\sigma_\mu(a)$ denotes the spectrum of image of $a$ in GNS representation associated to $\mu$.
\begin{theorem}
Let $(\c{A}_X,D)$ be a mof space and $\{\mu_x\}$ be a family of metric-states for $D$.
For every $x,x'\in X$, let $P_{x,x'}:=P(D_{x,x'},\mu_x\ot\mu_{x'})$. Then $(X,\{P_{x,x'}\})$ is a probabilistic metric space.
\end{theorem}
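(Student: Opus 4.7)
The plan is to verify the four axioms of a probabilistic metric space for $\{P_{x,x'}\}$, relying on the characterization recalled just before the theorem: for closed $G\subseteq[0,\infty)$, $P(a,\mu)(G)=1$ iff $\sigma_\mu(a)\subseteq G$. Throughout, write $(\pi_x,H_x,\xi_x)$ for the GNS triple of $\mu_x$. A preliminary that will be used throughout is the identification of the GNS representation of the product state $\mu_x\ot\mu_{x'}$ on the spatial tensor product $A_x\ot A_{x'}$ with $(\pi_x\ot\pi_{x'},H_x\ot H_{x'},\xi_x\ot\xi_{x'})$: cyclicity of $\xi_x\ot\xi_{x'}$ follows from density of $A_x\xi_x\ot_{\r{alg}}A_{x'}\xi_{x'}$ in $H_x\ot H_{x'}$.

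Axiom (i), that $P_{x,x}=\delta_0$, follows from positivity of $D_{x,x}$ together with $(\mu_x\ot\mu_x)D_{x,x}=0$: the standard estimate $D_{x,x}\ge\epsilon E_{D_{x,x}}([\epsilon,\infty))$ forces $(\mu_x\ot\mu_x)(E_{D_{x,x}}([\epsilon,\infty)))=0$ for every $\epsilon>0$, and normality of the bidual extension of $\mu_x\ot\mu_x$ then gives $P_{x,x}((0,\infty))=0$. Axiom (ii) is immediate from Definition \ref{d1}(ii): positive invertibility of $D_{x,x'}$ yields $\sigma(D_{x,x'})\subseteq[\delta,\infty)$ for some $\delta>0$, hence $P_{x,x'}(\{0\})=0$. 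Axiom (iii) follows by transport: $\f{F}$ is a $*$-isomorphism satisfying $(\mu_x\ot\mu_{x'})\circ\f{F}=\mu_{x'}\ot\mu_x$, and for any $*$-isomorphism $\pi$ one has $P(\pi(a),\mu)=P(a,\mu\circ\pi)$, so applying this to $D_{x,x'}=\f{F}D_{x',x}$ gives $P_{x,x'}=P_{x',x}$.

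The substantive step is the triangle inequality (iv). Assume $P_{x,x'}[0,r]=1$ and $P_{x',x''}[0,s]=1$, which, by the characterization, amount to the positive operator bounds $(\pi_x\ot\pi_{x'})(D_{x,x'})\le r\cdot 1$ and $(\pi_{x'}\ot\pi_{x''})(D_{x',x''})\le s\cdot 1$ on their respective GNS spaces. Apply the $*$-representation $\pi_x\ot\pi_{x'}\ot\pi_{x''}$ of $A_x\ot A_{x'}\ot A_{x''}$ to the inequality of Definition \ref{d1}(iv). The right-hand side becomes
\begin{equation*}
(\pi_x\ot\pi_{x'})(D_{x,x'})\ot 1_{H_{x''}}+1_{H_x}\ot(\pi_{x'}\ot\pi_{x''})(D_{x',x''})\le(r+s)\cdot 1.
\end{equation*}
Since $\f{M}$ inserts a unit in the middle tensor slot, the left-hand side is the operator on $H_x\ot H_{x'}\ot H_{x''}$ acting as $(\pi_x\ot\pi_{x''})(D_{x,x''})$ on the first and third factors and as the identity on $H_{x'}$; its spectrum coincides with $\sigma((\pi_x\ot\pi_{x''})(D_{x,x''}))$. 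Using positivity of $D_{x,x''}$ (for $x\ne x''$; the diagonal case $x=x''$ is already covered by (i)), we conclude $\sigma_{\mu_x\ot\mu_{x''}}(D_{x,x''})\subseteq[0,r+s]$, i.e. $P_{x,x''}[0,r+s]=1$.

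The main delicacy is the bookkeeping for GNS representations of product states on spatial tensor products, combined with the identification that applying $\pi_x\ot\pi_{x'}\ot\pi_{x''}$ to $\f{M}D_{x,x''}$ produces an operator whose spectrum equals that of $(\pi_x\ot\pi_{x''})(D_{x,x''})$; once this identification is made, axiom (iv) of Definition \ref{d1} converts cleanly into the probabilistic triangle inequality.
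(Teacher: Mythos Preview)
Your proof is correct and follows essentially the same approach as the paper's: both reduce axiom~(iv) to applying the GNS representation of the triple product state $\mu_x\ot\mu_{x'}\ot\mu_{x''}$ to the operator triangle inequality of Definition~\ref{d1}(iv) and reading off a spectral bound. The paper dispatches axioms (i)--(iii) without comment and treats the tensor/GNS identifications as understood; you spell these out (the GNS of a product state as the tensor of GNS triples, and $\sigma_\nu(\f{M}D_{x,x''})=\sigma_{\mu_x\ot\mu_{x''}}(D_{x,x''})$), which is exactly the bookkeeping the paper is suppressing.
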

\begin{proof}
The only thing that needs a little explanation is the probabilistic triangle inequality. Suppose that for real numbers
$r,s\geq0$ we have $P_{x,x'}[0,r]=1$ and $P_{x',x''}[0,s]=1$. Thus $\sigma_\nu(D_{x,x'}\ot1_{x''})\subseteq[0,r]$ and
$\sigma_\nu(1_x\ot D_{x',x''})\subseteq[0,s]$ where $\nu:=\mu_x\ot\mu_{x'}\ot\mu_{x''}$. On other hand by Definition \ref{d1}(iv) we have
$\f{M}D_{x,x''}\leq D_{x,x'}\ot1_{x''}+1_x\ot D_{x',x''}$. Thus $\sigma_\nu(\f{M}D_{x,x''})\subseteq[0,r+s]$ and hence $P_{x,x''}[0,r+s]=1$.
The proof is complete.
\end{proof}
\section{Lipschitz algebras}\label{s5-05}
Throughout this section, let $(\c{A}_X,D)$ be a fixed mof space. For $f\in\b{F}(\c{A}_X)$ we let
\begin{equation}\label{e5-081223}
\|f\|_D:=\sup_{x,x'\in X,x\neq x'}\|D_{x,x'}^{-1/2}(f(x)\ot1_{x'}-1_x\ot f(x'))D_{x,x'}^{-1/2}\|.
\end{equation}
It is easily checked that $\|\cdot\|_D$ is a seminorm (that may take value $\infty$). An operator field $f$ is called `Lipschitz' (w.r.t. $D$)
if $\|f\|_D<\infty$. The space of bounded Lipschitzs operator fields is denoted by $\b{L}(D):=\{f\in\ell_\infty(\c{A}_X):\|f\|_D<\infty\}$
and is called `Lipschitz space' of $D$. Also we let $\|f\|_\b{Lip}:=\|f\|_\infty+\|f\|_D$ for $f\in\b{L}(D)$.
It is easily seen that $(\b{L}(D),\|\cdot\|_\b{Lip})$
is a Banach space. In the case that $(\c{A}_X,D)$ is pointed, that is we are given a distinguished
element $x_0$ of $X$, we let $\b{L}_0(D)$ be the space of those Lipschitz operator fields $f\in\b{F}(\c{A}_X)$ with $f(x_0)=0$.
It is easily seen that $(\b{L}_0(D),\|\cdot\|_D)$ is a normed space.
For an operator field $f$ on $\c{A}_X$ we say that $f$ and $D$ commute if $D_{x,x'}$ and $f(x)\ot1_{x'}$ commute in $A_x\ot A_x'$.
The subspace of $\b{L}(D)$ (resp. $\b{L}_0(D)$) containing those operator fields which commute with $D$ is denoted by  $\b{Lip}(D)$
(resp. $\b{Lip}_0(D)$). For $f,g\in\b{F}(\c{A}_X)$ we have:
\begin{equation}\label{i1}
\begin{split}
&\|D_{x,x'}^{-1/2}(f(x)g(x)\ot1_{x'}-1_x\ot f(x')g(x'))D_{x,x'}^{-1/2}\|\leq\\
&\|[D_{x,x'}^{-1/2}(f(x)\ot1_{x'})D_{x,x'}^{1/2}][D_{x,x'}^{-1/2}(g(x)\ot1_{x'}-1_x\ot g(x'))D_{x,x'}^{-1/2}]\|+\\
&\|[D_{x,x'}^{-1/2}(f(x)\ot1_{x'}-1_x\ot f(x'))D_{x,x'}^{-1/2}][D_{x,x'}^{1/2}(1_x\ot g(x'))D_{x,x'}^{-1/2}]\|
\end{split}
\end{equation}
It follows that if $f,g$ commute with $D$ then $\|fg\|_D\leq\|f\|_\infty\|g\|_D+\|f\|_D\|g\|_\infty$. Thus for $f,g$ in  $\b{Lip}(D)$
we have $fg\in\b{Lip}(D)$ and $\|fg\|_\b{Lip}\leq\|f\|_\b{Lip}\|g\|_\b{Lip}$. Another useful inequality for $f,g\in\b{F}(\c{A}_X)$
with $f(x_0)=g(x_0)=0$ is as follows.
\begin{equation}\label{i1.5}
\begin{split}
\|f(x)-g(x)\|&=\|(f-g)(x)\ot1_{x_0}-1_x\ot(f-g)(x_0)\|\\
&\leq\|[D_{x,x_0}^{-1/2}((f-g)(x)\ot1_{x_0}-1_x\ot(f-g)(x_0))D_{x,x_0}^{-1/2}]\|\|D_{x,x_0}\|\\
&\leq\|f-g\|_D\|D_{x,x_0}\|
\end{split}
\end{equation}
\begin{theorem}\label{t1}
Let $(\c{A}_X,D)$ be a (pointed) mof space.
\begin{enumerate}
\item[(i)] $(\b{L}(D),\|\cdot\|_\b{Lip})$ is a Banach space.
\item[(ii)] $(\b{Lip}(D),\|\cdot\|_\b{Lip})$ is a Banach *-algebra with a closed ideal $\b{Lip}_0(D)\cap\ell_\infty(\c{A}_X)$.
\item[(iii)] $\b{L}_0(D),\b{Lip}_0(D)$ together with the norm $\|\cdot\|_D$ are Banach spaces.
\item[(iv)] If $D$ is bounded then $\b{L}_0(D)\subseteq\b{L}(D)$ (as sets).
\item[(v)] Suppose that $D^{-1}$ is bounded out of the diagonal of $X\times X$. Then $\b{L}(D)=\ell_\infty(\c{A}_X)$.
Moreover $(X,D^{\|\cdot\|})$ is a discrete metric space.
\item[(vi)] Suppose that $D,D^{-1}$ are bounded out of the diagonal of $X\times X$. Then the multiplication
on $\b{L}(D)=\ell_\infty(\c{A}_X)$ is $\|\cdot\|_\b{Lip}$-continuous.
\item[(vii)] If $D$ is central then $\b{L}_0(D)=\b{Lip}_0(D)$ and $\b{L}(D)=\b{Lip}(D)$.
\end{enumerate}
\end{theorem}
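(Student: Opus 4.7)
The seven items fall naturally into three groups: (i), (ii), (iii) are structural completeness/algebra assertions that follow by standard Banach-space arguments; (iv), (v), (vi) are norm-estimate consequences of various boundedness hypotheses on $D$ or $D^{-1}$; and (vii) is a purely algebraic observation about tensor products of centers. My three workhorses will be the displayed inequalities \eqref{e5-081223}, \eqref{i1}, and \eqref{i1.5}.

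For (i), a $\|\cdot\|_\b{Lip}$-Cauchy sequence $(f_n)$ is $\|\cdot\|_\infty$-Cauchy, hence converges pointwise in norm to a bounded field $f$. For each fixed $x\ne x'$ the quantity inside the supremum in \eqref{e5-081223} is norm-continuous in $(f(x),f(x'))$, so the standard ``lower semicontinuity of the sup'' argument gives $\|f_n-f\|_D\le\liminf_m\|f_n-f_m\|_D$, placing $f$ in $\b{L}(D)$ and showing $f_n\to f$ in $\|\cdot\|_\b{Lip}$. Part (iii) is the same argument, with \eqref{i1.5} (applied with $g\equiv 0$) providing both that $\|\cdot\|_D$ separates points of $\b{L}_0(D)$ and that $\|\cdot\|_D$-Cauchy sequences are pointwise norm-Cauchy. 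For (ii), submultiplicativity on $\b{Lip}(D)$ has been displayed in the text; the involution preserves $\|\cdot\|_D$ because $D^{\pm 1/2}$ is self-adjoint, and it preserves commutation with $D$ because $D^*=D$; finally, $\b{Lip}_0(D)\cap\ell_\infty(\c{A}_X)$ is an ideal by submultiplicativity together with $(fg)(x_0)=f(x_0)g(x_0)$, and is closed because evaluation at $x_0$ is $\|\cdot\|_\b{Lip}$-continuous.

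Items (iv), (v), (vi) are direct norm estimates. Part (iv) is exactly \eqref{i1.5} with $g\equiv 0$: $\|f\|_\infty\le\|f\|_D\sup_x\|D_{x,x_0}\|<\infty$. For (v), writing $M:=\sup_{x\ne x'}\|D_{x,x'}^{-1}\|$, the bound
\[
\|D_{x,x'}^{-1/2}(f(x)\ot 1_{x'}-1_x\ot f(x'))D_{x,x'}^{-1/2}\|\le \|D_{x,x'}^{-1}\|\cdot 2\|f\|_\infty\le 2M\|f\|_\infty
\]
shows every bounded field is Lipschitz, and $\|D_{x,x'}\|\ge\|D_{x,x'}^{-1}\|^{-1}\ge 1/M$ forces $(X,D^{\|\cdot\|})$ to be discrete. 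For (vi), inequality \eqref{i1} holds without any commutativity assumption; when both $D$ and $D^{-1}$ are bounded off the diagonal, its conjugation factors obey $\|D_{x,x'}^{-1/2}(f(x)\ot 1_{x'})D_{x,x'}^{1/2}\|\le C\|f\|_\infty$ with $C=(\|D\|\|D^{-1}\|)^{1/2}$, and analogously for the other factor, yielding $\|fg\|_\b{Lip}\le(1+C)\|f\|_\b{Lip}\|g\|_\b{Lip}$.

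Part (vii) reduces to the standard fact $\c{Z}(A_x)\ot\c{Z}(A_{x'})\subseteq\c{Z}(A_x\ot A_{x'})$: central elementary tensors commute with all elementary tensors, and by density of the algebraic tensor product together with joint norm-continuity of multiplication this extends to the spatial completion. Hence a central $D$ automatically commutes with every $f(x)\ot 1_{x'}$, rendering the commutation clause in the definition of $\b{Lip}(D)$ vacuous. The only genuinely delicate step in the whole argument is (vi): \eqref{i1} factors the Leibniz-type difference through $D$-conjugations that collapse to identities precisely under commutation with $D$; without commutation one must pay the price $\sqrt{\|D\|\|D^{-1}\|}$, which is exactly why (vi) requires control of \emph{both} $D$ and $D^{-1}$ off the diagonal.
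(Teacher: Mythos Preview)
Your proof is correct and follows essentially the same approach as the paper's. The paper dismisses (i), (ii), (vii) as ``easily checked'' or ``trivial'' and handles (iii)--(vi) exactly as you do: (iii) via \eqref{i1.5} to produce pointwise convergence of $\|\cdot\|_D$-Cauchy sequences, (iv) via \eqref{i1.5} with $g=0$, (v) from the definition of $\|\cdot\|_D$ together with $\|D_{x,x'}^{-1}\|^{-1}\le D^{\|\cdot\|}(x,x')$, and (vi) via \eqref{i1} with the conjugation factors bounded by $\|D^{-1/2}\|\cdot\|D^{1/2}\|$; your constant $C=(\|D\|\|D^{-1}\|)^{1/2}$ is precisely the paper's $M$ once a common bound is taken.
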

We call $\b{Lip}(D)$ `Lipschitz algebra' of $D$.
\begin{proof}
(i),(ii) are easily checked and (vii) is trivial. Let $x_0$ be the distinguished element of $X$. Suppose that $(f_n)_n$ is a Cauchy sequence in
$\b{L}_0(D)$ w.r.t. $\|\cdot\|_D$. It follows from (\ref{i1.5}) that $\|f_n(x)-f_m(x)\|\leq\|f_n-f_m\|_D\|D_{x,x_0}\|$. Thus $(f_n)_n$ is
point wise Cauchy and there is an $f\in\b{F}(\c{A}_X)$ with $f(x_0)=0$ such that $\xymatrix{f_n\ar[r]^-{\r{p.w.}}&f}$.
It is easily seen that $f\in\b{L}_0(D)$ and $\xymatrix{f_n\ar[r]^-{\|\cdot\|_D}&f}$. This proves (iii).
It follows from (\ref{i1.5}) with $g=0$ that for every operator field $f$ with $f(x_0)=0$ we have $\|f\|_\infty\leq\|f\|_D\|D\|_\infty$.
This proves (iv). The first part of (v) follows from the definition of $\|\cdot\|_D$ and the second part from
the inequality $\|D^{-1}_{x,x'}\|^{-1}\leq D^{\|\cdot\|}(x,x')$. Suppose that there is $M>0$ such that $\|D_{x,x'}\|,\|D^{-1}_{x,x'}\|\leq M$
for every $x\neq x'$. Then it follows from inequality (\ref{i1}) that for $f,g\in\ell_\infty(\c{A}_X)$,
$\|fg\|_D\leq M(\|f\|_\infty\|g\|_D+\|f\|_D\|g\|_\infty)$. This proves (vi).
\end{proof}
Let $(\c{A}_X,D)$ be a mof space. We define a new norm $\|\cdot\|_\b{Lip'}$ on $\b{L}(D)$ by $\|\cdot\|_\b{Lip'}:=\max\{\|\cdot\|_D,\|\cdot\|_\infty\}$.
It is clear that $\|\cdot\|_\b{Lip'}$ and $\|\cdot\|_\b{Lip}$ are equivalent norms.
Suppose that $D$ is bounded with $\|D\|_\infty\leq2$. Let $\infty$ denote a point which is not belongs to $X$ and
$A_\infty$ be an arbitrary nonzero C*-algebra. Let also $X^+:=X\cup\{\infty\}$ and $\c{A}^+:=\{A_x\}_{x\in X}\cup\{A_\infty\}$.
We define a mof $D^+$ on $\c{A}^+_{X^+}$ by $D^+|_{X\times X}:=D$, $D^+(\infty,x)=D^+(x,\infty):=1_\infty$ ($x\in X$), and $D^+(\infty,\infty)=0$.
Analogous to \cite[Theorem 1.7.2]{Weaver1} we have the following theorem. Its proof is easy and omitted.
\begin{theorem}\label{t1.5}
Let $(\c{A}_X,D)$ be a mof space with $\|D\|_\infty\leq2$. Then the map $f\mapsto f|_X$ is an isometric isomorphism
from $\b{L}_0(D^+)$ (resp. $\b{Lip}_0(D^+)$) onto $(\b{L}(D),\|\cdot\|_\b{Lip'})$ (resp. $(\b{Lip}(D),\|\cdot\|_\b{Lip'})$).
\end{theorem}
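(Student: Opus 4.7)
My plan is to split the argument into three essentially mechanical steps: (a) check that $D^+$ genuinely defines a mof on $\c{A}^+_{X^+}$; (b) compute $\|f\|_{D^+}$ explicitly for operator fields $f$ on $X^+$ satisfying $f(\infty)=0$; and (c) deduce surjectivity and track the commutation condition that singles out $\b{Lip}$ inside $\b{L}$.

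For (a), axioms (i)--(iii) of Definition \ref{d1} are immediate at the new point: $1_\infty$ and $0$ are positive and flip-invariant, and any state $\mu_\infty$ on $A_\infty$ serves as metric-state at $\infty$, so that $(\mu_\infty\ot\mu_\infty)D^+(\infty,\infty)=0$. The one substantive case of axiom (iv) is the triple $(x,\infty,x'')$ with $x,x''\in X$ and $x\neq x''$: the right-hand side equals $2\cdot 1_x\ot 1_\infty\ot 1_{x''}$, whereas the left-hand side is $\f{M}D_{x,x''}$, a positive element of norm at most $\|D\|_\infty\leq 2$, so the inequality follows by functional calculus. All other new triples either have a unit as an outer leg (making the inequality trivial from positivity of the remaining summand) or reduce immediately to the triangle inequality for $D$. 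This is the only place where the hypothesis $\|D\|_\infty\leq 2$ enters.

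For (b), fix $f$ with $f(\infty)=0$ and split the supremum in (\ref{e5-081223}) according to whether the pair of indices lies in $X\times X$ or involves $\infty$. On $X\times X$ the summand is exactly the one defining $\|f|_X\|_D$. For a pair $(x,\infty)$ with $x\in X$ one has $D^+(x,\infty)^{-1/2}=1$, so the summand collapses to $\|f(x)\ot 1_\infty\|=\|f(x)\|$; the pair $(\infty,x)$ gives the same value by symmetry. Taking suprema yields
\[\|f\|_{D^+}=\max\bigl\{\|f|_X\|_D,\ \|f|_X\|_\infty\bigr\}=\|f|_X\|_{\b{Lip}'},\]
so restriction is an isometry from $\b{L}_0(D^+)$ into $(\b{L}(D),\|\cdot\|_{\b{Lip}'})$, and is injective because $f(\infty)=0$ is prescribed.

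Finally, for (c), given $g\in\b{L}(D)$ set $f|_X:=g$ and $f(\infty):=0$; by the preceding formula $\|f\|_{D^+}=\|g\|_{\b{Lip}'}<\infty$, so $f\in\b{L}_0(D^+)$ and the restriction map is surjective. For the $\b{Lip}$ variant, the commutation of $f(x)\ot 1_{x'}$ with $D^+(x,x')$ is automatic whenever $x$ or $x'$ equals $\infty$, since $D^+$ is a scalar there; at pairs in $X\times X$ it coincides with commutation of $g$ with $D$. Hence the restriction map descends to an isometric isomorphism $\b{Lip}_0(D^+)\to(\b{Lip}(D),\|\cdot\|_{\b{Lip}'})$. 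The only genuine (if minor) obstacle in the whole argument is the boundedness case of step (a); everything else is a direct substitution into the definitions.
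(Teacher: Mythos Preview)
Your argument is correct and complete. The paper itself omits the proof entirely, stating only that it is ``easy and omitted'' and pointing to the classical analogue \cite[Theorem 1.7.2]{Weaver1}; your three-step verification (well-definedness of $D^+$, the explicit computation $\|f\|_{D^+}=\max\{\|f|_X\|_D,\|f|_X\|_\infty\}$, and surjectivity together with the commutation check) is precisely the routine unpacking the authors had in mind, and you have correctly identified the triple $(x,\infty,x'')$ as the sole place where the hypothesis $\|D\|_\infty\le 2$ is needed.
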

\section{Some basic properties and examples of Lipschitz operator fields}\label{s5-031303}
In order to define $\|f\|_D$ we can use, instead of (\ref{e5-081223}), the ordering between positive elements provided that $f$ commutes with $D$:
\begin{lemma}\label{l1}
Let $D$ be a mof on $\c{A}_X$. Suppose that $f$ in $\b{F}(\c{A}_X)$ commutes with $D$. Then $f$ is Lipschitz
iff there is a real number $r>0$ such that
\begin{equation}\label{e5-081247}
|f(x)\ot1_{x'}-1_x\ot f(x')|<rD(x,x')\quad(x,x'\in X,x\neq x').
\end{equation}
In this case $\|f\|_D$ is equal to least number $r>0$ satisfying the above inequality.
\end{lemma}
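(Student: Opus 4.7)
\emph{Proof plan.} The strategy is to reduce the statement, for each fixed pair $x\neq x'$, to a routine fact about commuting positive and self-adjoint elements inside a commutative C*-subalgebra, and then pass to the supremum over all such pairs.

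Fix $x\neq x'$ in $X$ and write $d:=D_{x,x'}$ and $b:=f(x)\ot 1_{x'}-1_x\ot f(x')$. By Definition \ref{d1}(ii), $d$ is positive and invertible in $A_x\ot A_{x'}$, and $b$ is self-adjoint. First I would check that \emph{both} summands of $b$ commute with $d$. That $f(x)\ot 1_{x'}$ does is exactly the commutation hypothesis. For $1_x\ot f(x')$, apply the same hypothesis at the swapped pair $(x',x)$ to obtain that $f(x')\ot 1_x$ commutes with $D_{x',x}$ in $A_{x'}\ot A_x$; pushing this through the flip *-isomorphism $\f{F}$ and invoking Definition \ref{d1}(iii) yields that $1_x\ot f(x')$ commutes with $\f{F}D_{x',x}=d$. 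Hence $b$ commutes with $d$, and so with $d^{\pm 1/2}$, so that $b$, $d$ and $1$ generate a commutative unital C*-subalgebra $C\subseteq A_x\ot A_{x'}$.

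The main content is then routine functional calculus inside $C$. Under a Gelfand isomorphism $C\cong\b{C}(K)$, $d$ corresponds to a strictly positive function $\tilde d$ and $b$ to a real-valued function $\tilde b$. For $r>0$, the inequality $|b|\leq rd$ is equivalent to $|\tilde b|\leq r\tilde d$ pointwise on $K$, which is equivalent to $\|\tilde b/\tilde d\|_\infty\leq r$. Since $b$ commutes with $d^{-1/2}$, the element $d^{-1/2}bd^{-1/2}=bd^{-1}$ corresponds under the isomorphism to $\tilde b/\tilde d$; hence the last condition reads $\|d^{-1/2}bd^{-1/2}\|\leq r$. Thus the two assertions, for the given pair $(x,x')$, that $|b|\leq r d$ and that $\|d^{-1/2}bd^{-1/2}\|\leq r$, are equivalent.

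Taking the supremum over all $x\neq x'$ then translates the defining formula (\ref{e5-081223}) for $\|f\|_D$ into the statement: a number $r>0$ satisfies (\ref{e5-081247}) simultaneously for all $x\neq x'$ if and only if $\|f\|_D\leq r$; in particular, such an $r$ exists iff $f$ is Lipschitz, and $\|f\|_D$ itself is the least such $r$. The only nontrivial bookkeeping is the commutation observation in the first step — once the symmetry axiom (iii) is used to transfer the hypothesis from the left tensor factor to the right, the remainder is classical Gelfand theory and there is no essential obstacle.
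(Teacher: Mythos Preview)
Your approach is essentially the paper's own (which compresses the whole argument into two sentences), and your explicit use of axiom (iii) to deduce that $1_x\ot f(x')$ commutes with $D_{x,x'}$ is a point the paper passes over in silence. There is, however, one slip: you assert that $b=f(x)\ot 1_{x'}-1_x\ot f(x')$ is self-adjoint, but nothing in the hypotheses forces $f$ to be self-adjoint (or even normal), so $b$ need not be normal, and then the C*-subalgebra generated by $b,d,1$ need not be commutative.

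The repair is minor and leaves your strategy intact. Since $b$ and hence $b^*$ commute with $d$, so does $|b|=(b^*b)^{1/2}$; work instead in the commutative C*-subalgebra generated by $|b|,d,1$. Because $b$ commutes with $d^{-1/2}$ one has
\[
|d^{-1/2}bd^{-1/2}|^2=d^{-1/2}b^*d^{-1}bd^{-1/2}=d^{-1}|b|^2d^{-1},
\]
so $\|d^{-1/2}bd^{-1/2}\|\leq r$ iff $d^{-1}|b|^2d^{-1}\leq r^2\,1$ iff $|b|^2\leq r^2d^2$; and in the commutative subalgebra the latter is equivalent to $|b|\leq rd$. With this adjustment your argument is complete and matches the paper's.
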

\begin{proof}
By definition we have $\|f\|_D\leq r$ iff
$$|D_{x,x'}^{-1/2}(f(x)\ot1_{x'}-1_x\ot f(x'))D_{x,x'}^{-1/2}|\leq r1_x\ot1_{x'}.$$
Since $f$ commutes with $D$ the latter inequality is satisfied iff (\ref{e5-081247}) is satisfied.
\end{proof}
\begin{proposition}\label{p1}
Let $D$ be a mof on $\c{A}_X$.
\begin{enumerate}
\item[(i)] Let $f\in\b{F}(\c{A}_X)$ be Lipschitz w.r.t. $D$. Suppose that $f$ commutes with $D$. Then
the function $x\mapsto\|f(x)\|$ is Lipschitz w.r.t. $D^{\|\cdot\|}$.
\item[(ii)] Let $f$ be a complex valued function on $X$. If $f$ is Lipschitz w.r.t. $D^{\|\cdot\|}$
then the scalar valued operator field $\tilde{f}$ associated to $f$ and $\c{A}_X$, is Lipschitz w.r.t. $D$. In particular, if $\{\mu_x\}$ is a family
of metric-states of $D$, and if $f$ is Lipschitz w.r.t. $D^\mu$, then $\tilde{f}$ is Lipschitz w.r.t. $D$.
\end{enumerate}
\end{proposition}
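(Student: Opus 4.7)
The plan for part (i) is direct: apply Lemma \ref{l1}, push the resulting operator inequality through the C*-norm, and finish with the reverse triangle inequality. Concretely, since $f$ commutes with $D$, Lemma \ref{l1} gives $|f(x)\ot1_{x'}-1_x\ot f(x')|\leq\|f\|_D\,D(x,x')$ for every $x\neq x'$. Monotonicity of the operator norm on positive elements yields $\|f(x)\ot1_{x'}-1_x\ot f(x')\|\leq\|f\|_D\|D_{x,x'}\|=\|f\|_D\cdot D^{\|\cdot\|}(x,x')$, and the reverse triangle inequality in $A_x\ot A_{x'}$, combined with the standard identities $\|a\ot1_{x'}\|=\|a\|$ and $\|1_x\ot b\|=\|b\|$, produces $\bigl|\|f(x)\|-\|f(x')\|\bigr|\leq\|f\|_D\,D^{\|\cdot\|}(x,x')$. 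This exhibits $x\mapsto\|f(x)\|$ as Lipschitz with constant at most $\|f\|_D$.

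For part (ii), the key observation is that $\tilde f(x)=f(x)1_x$ lies in $\c{Z}(A_x)$, so $\tilde f$ commutes with $D$ and Lemma \ref{l1} again applies. A direct computation gives $\tilde f(x)\ot1_{x'}-1_x\ot\tilde f(x')=(f(x)-f(x'))(1_x\ot1_{x'})$, so the criterion of Lemma \ref{l1} reduces the problem to finding $r>0$ with $|f(x)-f(x')|(1_x\ot1_{x'})\leq r\,D(x,x')$ for all $x\neq x'$. I would then extract such an $r$ from the hypothesized scalar bound $|f(x)-f(x')|\leq\|f\|_{D^{\|\cdot\|}}\|D_{x,x'}\|$, concluding that $\tilde f\in\b{Lip}(D)$. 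The ``in particular'' clause follows formally from the pointwise estimate $D^\mu\leq D^{\|\cdot\|}$ recorded right after Definition \ref{d1}: this forces $\|f\|_{D^{\|\cdot\|}}\leq\|f\|_{D^\mu}$, so any function Lipschitz with respect to $D^\mu$ is automatically Lipschitz with respect to $D^{\|\cdot\|}$, and the first half of (ii) kicks in.

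The step I expect to require the most care is the final move in part (ii): converting the scalar estimate $|f(x)-f(x')|\leq L\|D_{x,x'}\|$ into the operator order inequality $|f(x)-f(x')|(1_x\ot1_{x'})\leq r\,D(x,x')$. Since in general $D(x,x')\leq\|D_{x,x'}\|(1_x\ot1_{x'})$, this comparison points in the ``wrong'' direction for a naive substitution, and one must exploit the precise scalar form of the commutator expression in $A_x\ot A_{x'}$ to make the estimate go through; this is the point I would need to handle most carefully. Part (i), by contrast, is essentially a computation once Lemma \ref{l1} is invoked.
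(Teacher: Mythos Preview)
Your treatment of part~(i) coincides with the paper's argument line for line: invoke Lemma~\ref{l1}, pass to norms, and finish with the reverse triangle inequality together with $\|a\ot 1_{x'}\|=\|a\|$ and $\|1_x\ot b\|=\|b\|$.

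For part~(ii) the paper says only ``The proof of (ii) is straightforward,'' so there is no argument to compare against; but the step you flag as delicate is in fact a genuine obstruction, not merely a point requiring care. Computing directly from~(\ref{e5-081223}) one finds
\[
\|\tilde f\|_D\;=\;\sup_{x\neq x'}|f(x)-f(x')|\,\|D_{x,x'}^{-1}\|,
\]
whereas $\|f\|_{D^{\|\cdot\|}}=\sup_{x\neq x'}|f(x)-f(x')|\,\|D_{x,x'}\|^{-1}$. For a positive invertible element $a$ one has $\|a^{-1}\|=(\min\sigma(a))^{-1}\ge\|a\|^{-1}$, with equality only when $a$ is a scalar; so the comparison runs the wrong way, exactly as you suspected. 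A quotient-metric example makes the failure explicit: take $Y=\{(0,0)\}\cup\bigcup_{n\ge1}\bigl(\{1/n\}\times[0,1]\bigr)\subset\mathbb R^2$ with the Euclidean distance, and let the equivalence classes be $x_0=\{(0,0)\}$ and $x_n=\{1/n\}\times[0,1]$, with $A_{x}=\b{C}(x)$ and $D_{x,x'}$ the restriction of the Euclidean distance. Then $D^{\|\cdot\|}(x_0,x_n)=\sqrt{1+n^{-2}}\ge1$ while $\|D_{x_0,x_n}^{-1}\|=n$; the function $f$ with $f(x_0)=0$ and $f(x_n)=1$ satisfies $\|f\|_{D^{\|\cdot\|}}\le1$, yet $\|\tilde f\|_D\ge n$ for every $n$, so $\tilde f$ is not Lipschitz with respect to $D$. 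Hence part~(ii) as stated (and with it the isometry claim of Theorem~\ref{t5-081316}) does not hold in general; the correct hypothesis controls $|f(x)-f(x')|$ by $\|D_{x,x'}^{-1}\|^{-1}$ rather than by $\|D_{x,x'}\|$. Your reservation was exactly on target.
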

\begin{proof}
Suppose that $f\in\b{F}(\c{A}_X)$ is Lipschitz and commutes with $D$. By Lemma \ref{l1} we have
\begin{equation*}
\begin{split}
|\|f(x)\|-\|f(x')\||&=|\|f(x)\ot1_{x'}\|-\|1_x\ot f(x')\||\\
&\leq\|f(x)\ot1_{x'}-1_x\ot f(x')\|\\
&=\||f(x)\ot1_{x'}-1_x\ot f(x')|\|\\
&\leq\|f\|_D\|D(x,x')\|.
\end{split}
\end{equation*}
This proves (i). The proof of (ii) is straightforward.
\end{proof}
\begin{theorem}\label{t5-081316}
Let $(\c{A}_X,D)$ be a mof space. The assignment $f\mapsto\tilde{f}$ defines an isometric *-homomorphism from $\b{Lip}(D^{\|\cdot\|})$ into
$\b{Lip}(D)$.
\end{theorem}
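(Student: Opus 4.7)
The plan is to verify the three components of the statement in sequence: well-definedness of the map, its preservation of the *-algebra structure, and its isometric character. Each reduces to a short check, with only the last requiring a nontrivial computation.

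First I would establish well-definedness. The operator field $\tilde{f}$ automatically commutes with $D$, because for every $x,x'$ the element $\tilde{f}(x)\ot1_{x'}=f(x)\cdot 1_{A_x\ot A_{x'}}$ is a scalar multiple of the unit and so commutes with $D_{x,x'}$. Proposition \ref{p1}(ii) asserts that $f\in\b{Lip}(D^{\|\cdot\|})$ forces $\tilde{f}$ to be Lipschitz with respect to $D$, and the trivial identity $\|\tilde{f}\|_\infty=\sup_x|f(x)|\|1_x\|=\|f\|_\infty$ makes $\tilde{f}$ bounded. Combined with the commutativity with $D$, this places $\tilde{f}$ in $\b{Lip}(D)$ rather than just in $\b{L}(D)$.

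Next I would verify the *-homomorphism identities, which are completely formal. Working pointwise in each $A_x$ one has $(f+g)(x)\cdot 1_x=f(x)\cdot 1_x+g(x)\cdot 1_x$, $(fg)(x)\cdot 1_x=(f(x)\cdot 1_x)(g(x)\cdot 1_x)$, $\overline{f(x)}\cdot 1_x=(f(x)\cdot 1_x)^*$, and the constant function $1$ maps to the unit operator field $x\mapsto 1_x$. So the assignment is a unit-preserving *-homomorphism from the algebra of complex functions into $\b{F}(\c{A}_X)$, and restricting to Lipschitz representatives yields a *-homomorphism $\b{Lip}(D^{\|\cdot\|})\to\b{Lip}(D)$.

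The real content is the isometry claim $\|\tilde{f}\|_\b{Lip}=\|f\|_\b{Lip}$. Since the sup norms agree, this reduces to $\|\tilde{f}\|_D=\|f\|_{D^{\|\cdot\|}}$. Unpacking (\ref{e5-081223}), the difference $\tilde{f}(x)\ot1_{x'}-1_x\ot\tilde{f}(x')$ equals the scalar $(f(x)-f(x'))\cdot 1_{A_x\ot A_{x'}}$, so the conjugation by $D_{x,x'}^{-1/2}$ collapses to $(f(x)-f(x'))D_{x,x'}^{-1}$, and taking the operator norm gives the value of the $x,x'$-summand of the supremum in terms of $|f(x)-f(x')|$ and the norm of $D_{x,x'}^{-1}$. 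Taking the supremum over $x\neq x'$ identifies $\|\tilde{f}\|_D$ with the Lipschitz seminorm of $f$ against $D^{\|\cdot\|}$.

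The routine part is the algebraic and well-definedness check; the only place where care is needed is in the final sup computation of $\|\tilde{f}\|_D$, where one has to verify that passing to scalars and replacing the C*-algebraic conjugation by a spectral estimate recovers exactly $\|f\|_{D^{\|\cdot\|}}$. Once that identification is in place, the isometry of the Lipschitz norms follows by adding the trivially matching sup norms.
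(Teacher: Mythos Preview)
Your well-definedness and *-homomorphism checks are fine, and your computation up to the point
\[
D_{x,x'}^{-1/2}\big(\tilde f(x)\ot1_{x'}-1_x\ot\tilde f(x')\big)D_{x,x'}^{-1/2}=(f(x)-f(x'))\,D_{x,x'}^{-1}
\]
is correct. The gap is in the last step, where you ``identify $\|\tilde f\|_D$ with the Lipschitz seminorm of $f$ against $D^{\|\cdot\|}$.'' Taking norms in the display above gives
\[
\|\tilde f\|_D=\sup_{x\neq x'}|f(x)-f(x')|\,\|D_{x,x'}^{-1}\|,
\]
whereas by definition
\[
\|f\|_{D^{\|\cdot\|}}=\sup_{x\neq x'}\frac{|f(x)-f(x')|}{\|D_{x,x'}\|}.
\]
For a positive invertible element $a$ one has $\|a^{-1}\|=(\min\sigma(a))^{-1}$, not $\|a\|^{-1}$; these agree only when $\sigma(a)$ is a single point, i.e.\ $a$ is a scalar multiple of the unit. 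So your argument actually yields $\|\tilde f\|_D\geq\|f\|_{D^{\|\cdot\|}}$ (hence injectivity), but not equality in general. Equality holds precisely when each $D_{x,x'}$ is a scalar, i.e.\ when $D=\tilde d$ for an ordinary metric $d$.

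For comparison, the paper's own proof is the single line ``It follows directly from Proposition~\ref{p1}(ii),'' and that proposition only asserts that $\tilde f$ is Lipschitz when $f$ is, with no control on the seminorm. So the isometry claim is not established there either; your more detailed computation in fact exposes that the ``isometric'' part of the statement does not follow from the given arguments and appears to fail for non-scalar $D$.
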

\begin{proof}
It follows directly from Proposition \ref{p1}(ii).
\end{proof}
If $(X,d)$ is an ordinary metric space then it is well-known that for every $x_0\in X$ the function $x\mapsto d(x_0,x)$ is a Lipschitz function.
Analogously for mof spaces we have:
\begin{proposition}\label{p5-081353}
Let $D$ be a mof on $\c{A}_X$. Let $x_0\in X$ and $\mu\in\c{S}(A_{x_0})$ be fixed. Then $f_{\mu}\in\b{F}(\c{A}_X)$
defined by $x\mapsto (\mu\ot\r{id})D(x_0,x)$ is a Lipschitz operator field.
\end{proposition}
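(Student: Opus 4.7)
The plan is to deduce the Lipschitz property of $f_\mu$ as a direct operator-level analogue of the classical fact that $x \mapsto d(x_0,x)$ has Lipschitz constant at most $1$. The key tool is the quantum triangle inequality in Definition \ref{d1}(iv), which I will specialise with $x_0$ as one endpoint and $x, x'$ as the others, and then collapse the $A_{x_0}$-tensor slot against the state $\mu$.

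Concretely, first I would fix $x, x' \in X$ and apply (iv) to the triple $(x_0, x, x')$, obtaining
\begin{equation*}
\f{M}D(x_0,x') \leq D(x_0,x)\ot 1_{x'} + 1_{x_0}\ot D(x,x') \quad \text{in } A_{x_0}\ot A_x \ot A_{x'}.
\end{equation*}
Next I would apply the positive (in fact unital completely positive) slice map $\mu\ot\r{id}\ot\r{id}: A_{x_0}\ot A_x\ot A_{x'}\to A_x\ot A_{x'}$. A direct computation on elementary tensors shows this map sends $\f{M}D(x_0,x')$ to $1_x\ot f_\mu(x')$, sends $D(x_0,x)\ot 1_{x'}$ to $f_\mu(x)\ot 1_{x'}$, and sends $1_{x_0}\ot D(x,x')$ to $D(x,x')$. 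Hence
\begin{equation*}
1_x\ot f_\mu(x') - f_\mu(x)\ot 1_{x'} \leq D(x,x').
\end{equation*}
Swapping the roles of $x$ and $x'$ and transporting the resulting inequality through the flip $\f{F}$ (which is a $*$-isomorphism and therefore order-preserving), together with property (iii), yields the reverse bound $f_\mu(x)\ot 1_{x'} - 1_x\ot f_\mu(x')\leq D(x,x')$, so in total
\begin{equation*}
-D(x,x') \leq f_\mu(x)\ot 1_{x'} - 1_x\ot f_\mu(x') \leq D(x,x').
\end{equation*}

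Finally, since $D(x,x')$ is positive and invertible for $x\neq x'$ by (ii), conjugating this two-sided estimate by $D(x,x')^{-1/2}$ gives $-1_x\ot 1_{x'} \leq D(x,x')^{-1/2}(f_\mu(x)\ot 1_{x'}-1_x\ot f_\mu(x'))D(x,x')^{-1/2} \leq 1_x\ot 1_{x'}$. The middle element is self-adjoint (because $f_\mu$ takes self-adjoint values, $D(x_0,\cdot)$ being self-adjoint so that the slice map $\mu\ot\r{id}$ preserves self-adjointness), so its norm is at most $1$. Taking the supremum over $x\neq x'$ in \eqref{e5-081223} shows $\|f_\mu\|_D\leq 1$, proving that $f_\mu$ is Lipschitz.

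The only subtle step is the clean reduction from the three-factor inequality in (iv) to a two-factor inequality via a positive slice map; everything else is essentially formal. In particular, there is no obstacle concerning the degenerate case $x=x_0$ or $x'=x_0$ because the triangle inequality and the slice-map computation both remain valid in that generality. I would not claim membership of $f_\mu$ in $\b{Lip}(D)$ (which would require commutation with $D$ and boundedness) since the statement only asserts that $f_\mu$ is a Lipschitz operator field.
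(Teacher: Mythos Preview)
Your argument is correct and follows essentially the same route as the paper: apply the triangle inequality from Definition \ref{d1}(iv) to a triple involving $x_0$, slice against the state $\mu$ to obtain the two-sided estimate $-D_{x,x'}\leq f_\mu(x)\ot1_{x'}-1_x\ot f_\mu(x')\leq D_{x,x'}$, and conjugate by $D_{x,x'}^{-1/2}$ to conclude $\|f_\mu\|_D\leq1$. The only cosmetic differences are that the paper places $x_0$ in the \emph{middle} tensor slot (using $\r{id}\ot\mu\ot\r{id}$ rather than your $\mu\ot\r{id}\ot\r{id}$) and conjugates at the three-factor level before slicing, whereas you slice first and conjugate afterward; these operations commute because the slice map is a bimodule map over the ``$\r{id}$'' factors.
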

\begin{proof}
Let $x\neq x'$. We have
$$\Gamma_{x,x'}:=D_{x,x'}^{-1/2}(f_{\mu}(x)\ot1_{x'}-1_x\ot f_{\mu}(x'))D_{x,x'}^{-1/2}=(\r{id}\ot\mu\ot\r{id})(\Lambda_{x,x'}),$$
where $\Lambda_{x,x'}:=[\f{M}D_{x,x'}^{-1/2}][D_{x,x_0}\ot1_{x'}-1_x\ot D_{x_0,x'}][\f{M}D_{x,x'}^{-1/2}]$. By Definition \ref{d1}(iv)
we have $D_{x,x_0}\ot1_{x'}-1_x\ot D_{x_0,x'}\leq\f{M}D_{x,x'}$. Thus $\Lambda_{x,x'}\leq 1_x\ot1_{x_0}\ot1_{x'}$.
Similarly $-\Lambda_{x,x'}\leq 1_x\ot1_{x_0}\ot1_{x'}$. Therefore $\|\Gamma_{x,x'}\|\leq1$. It follows that
$\|f_\mu\|_D=\sup_{x,x'}\|\Gamma_{x,x'}\|\leq1$.
\end{proof}
The following theorem shows that our theory of Lipschitz algebras of operator fields is different from theory of
Lipschitz algebras of functions over ordinary metric spaces.
\begin{theorem}\label{t5-081806}
Let $D$ be a central mof on $\c{A}_X$ which is not scalar valued i.e. there is no ordinary metric $d$ on $X$ such that $D=\tilde{d}$.
Then there exists a Lipschitz operator field on $\c{A}_X$ which is not scalar valued. Moreover if $D$ is bounded
then the isometric *-homomorphism introduced in Theorem \ref{t5-081316} is not surjective.
\end{theorem}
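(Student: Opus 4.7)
My plan is to construct a non-scalar-valued Lipschitz operator field from the family introduced in Proposition \ref{p5-081353}. For $x_0 \in X$ and $\mu \in \c{S}(A_{x_0})$, recall $f_\mu(x) := (\mu \ot \r{id})D(x_0, x)$, which is Lipschitz with $\|f_\mu\|_D \leq 1$. Since $D$ is central, $D_{x_0, x} \in \c{Z}(A_{x_0}) \ot \c{Z}(A_x)$, hence $f_\mu(x) \in \c{Z}(A_x)$ and $f_\mu$ commutes with $D$ (because elementary tensors from $\c{Z}(A_x) \ot \c{Z}(A_{x'})$ are central in $A_x \ot A_{x'}$). When $D$ is bounded, $\|f_\mu\|_\infty \leq \|D\|_\infty$, so $f_\mu \in \b{Lip}(D)$.

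For the main claim I would argue the contrapositive: assuming $f_\mu$ is scalar valued for every $x_0 \in X$ and every $\mu \in \c{S}(A_{x_0})$, deduce that $D = \tilde{d}$ for some ordinary metric $d$ on $X$. Via Gelfand duality, write $\c{Z}(A_x) \cong \b{C}(K_x)$ for a compact Hausdorff space $K_x$, so the central element $D_{x_0, x}$ corresponds to a positive continuous function $d_{x_0, x}$ on $K_{x_0} \times K_x$. For each $z \in K_{x_0}$, extend the character $\chi_z$ of $\c{Z}(A_{x_0})$ to a state $\mu_z$ on $A_{x_0}$ (a standard fact about state extensions from unital C*-subalgebras); then $f_{\mu_z}(x) \in \c{Z}(A_x)$ corresponds to the slice $y \mapsto d_{x_0, x}(z, y)$. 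Scalar-valuedness of $f_{\mu_z}$ forces this slice to be constant for every $z$. Running the same argument with $x_0$ and $x$ swapped and invoking the flip symmetry of Definition \ref{d1}(iii), the slices in the other variable are also constant. Hence $d_{x_0, x}$ is constant on $K_{x_0} \times K_x$, equal to some $d(x_0, x) \geq 0$, so $D_{x_0, x} = d(x_0, x)\,1_{x_0} \ot 1_x$. Axioms (i)--(iv) of Definition \ref{d1} then translate directly into the axioms of an ordinary metric $d$ on $X$, giving $D = \tilde{d}$ and contradicting hypothesis.

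For the moreover clause, when $D$ is bounded, $\b{Lip}(D) = \b{L}(D)$ by Theorem \ref{t1}(vii), and the non-scalar $f_\mu$ just produced lies in $\b{Lip}(D)$. Since the image of $f \mapsto \tilde{f}$ from $\b{Lip}(D^{\|\cdot\|})$ consists precisely of scalar valued operator fields, $f_\mu$ is not in this image, proving non-surjectivity. The most delicate step will be the Gelfand-level translation combined with the lifting of characters of $\c{Z}(A_{x_0})$ to states of $A_{x_0}$ and the symmetric handling of both variables of $d_{x_0, x}$ via the flip axiom; everything else is bookkeeping.
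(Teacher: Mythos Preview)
Your proposal is correct and follows essentially the same strategy as the paper: exhibit a non-scalar $f_\mu$ from Proposition~\ref{p5-081353}. The only difference is packaging---the paper argues directly rather than by contrapositive, isolating the key step as Lemma~\ref{l5-081809} (for commutative $B,B'$, any non-scalar $F\in B\ot B'$ has a non-scalar slice $(\mu\ot\r{id})F$ for some state $\mu$ on one of the factors), picks a single pair $x_0,x_1$ with $D_{x_0,x_1}$ non-scalar, and applies the lemma once; your Gelfand-plus-state-extension argument is effectively an inline proof of that lemma, and your swap-and-flip step plays the role of the ``at least one of $B$ or $B'$'' clause.
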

For the proof we need the following lemma that its proof is easy and omitted.
\begin{lemma}\label{l5-081809}
Let $B,B'$ be commutative C*-algebras. Suppose that $F\in B\ot B'$ is not a scalar multiple of $1_B\ot1_{B'}$.
Then at least for one of $B$ or $B'$, say $B$, there exists $\mu\in\c{S}(B)$ such that $(\mu\ot\r{id})(F)$ is not a scalar multiple of $1_{B'}$.
\end{lemma}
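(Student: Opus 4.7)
The plan is to reduce to the Gelfand picture and then argue by contrapositive using point-evaluation states. By the Gelfand-Naimark theorem (and the unital convention of the paper), identify $B\cong\b{C}(Y)$ and $B'\cong\b{C}(Y')$ for compact Hausdorff spaces $Y,Y'$, so that $B\ot B'\cong\b{C}(Y\times Y')$. Under this identification the hypothesis on $F$ translates into the statement that $F$, viewed as a continuous function on $Y\times Y'$, is not a constant function, while $1_B\ot 1_{B'}$ is the constant function $1$.

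Now I would argue the contrapositive. Suppose that for every $\mu\in\c{S}(B)$ the slice $(\mu\ot\r{id})(F)\in B'$ is a scalar multiple of $1_{B'}$ and, symmetrically, for every $\mu'\in\c{S}(B')$ the slice $(\r{id}\ot\mu')(F)\in B$ is a scalar multiple of $1_B$. Specializing to the point-mass states $\delta_y\in\c{S}(B)$ (these are genuine states because $Y$ is compact Hausdorff), one checks on elementary tensors $f\ot g\mapsto f(y)g$ and extends by norm-continuity of the slice map that $(\delta_y\ot\r{id})(F)$ corresponds to the function $y'\mapsto F(y,y')$ on $Y'$. The first hypothesis therefore says that for every fixed $y\in Y$ the slice $F(y,\cdot)$ is constant on $Y'$, so $F(y,y')=g(y)$ depends only on $y$. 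Applying the symmetric statement to the point-masses $\delta_{y'}\in\c{S}(B')$ shows that $y\mapsto g(y)=F(y,y')$ is constant on $Y$ for each $y'$, and hence $F$ is constant on $Y\times Y'$, contradicting the hypothesis on $F$. This gives the desired $\mu$ on one of the two factors.

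There is really no obstacle: the only small point requiring care is the identification of $\mu\ot\r{id}$ at a point-mass with evaluation in the first variable, which is immediate on the algebraic tensor product and extends by continuity to the completed spatial tensor product $\b{C}(Y)\ot\b{C}(Y')\cong\b{C}(Y\times Y')$. The use of point-mass states (rather than arbitrary states) is what makes the argument elementary and explains why the author declares the proof easy.
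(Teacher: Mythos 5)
Your proof is correct. The paper omits the proof of this lemma entirely (stating only that it is easy), and your argument — Gelfand identification $B\otimes B'\cong\mathbf{C}(Y\times Y')$, passage to the contrapositive, and the observation that the point-mass slices $(\delta_y\otimes\mathrm{id})(F)=F(y,\cdot)$ and $(\mathrm{id}\otimes\delta_{y'})(F)=F(\cdot,y')$ being constant for all $y,y'$ forces $F$ to be constant — is exactly the standard elementary argument the author presumably intended, with the one delicate point (extending the identification of the slice map from elementary tensors to the completed spatial tensor product by contractivity) correctly addressed.
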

\begin{proof}
\emph{of Theorem \ref{t5-081806}:}
By assumptions there exist $x_0,x_1\in X$ such that $D(x_0,x_1)$ is not a scalar multiple of $1_{x_0}\ot1_{x_1}$.
It follows from Lemma \ref{l5-081809} that for at least one of $x_0$ or $x_1$, say $x_0$ there exists $\mu\in A_{x_0}$
such that $(\mu\ot\r{id})D_{x_0,x_1}$ is not a scalar multiple of $1_{x_1}$. Thus the Lipschitz operator field $f_\mu\in\b{F}(\c{A}_X)$
defined in Proposition \ref{p5-081353} is not scalar valued. If $D$ is bounded then $f_\mu\in\b{Lip}(D)$. The proof is complete.
\end{proof}

For an ordinary metric space $(X,d)$ it is easily checked that $d$ is a Lipschitz function on $X\times X$ w.r.t.
the metric $d\times d$. Moreover if $X$ has at least two points
then $\|d\|_{d\times d}=1$.  The similar result is also satisfied for mof spaces:
\begin{theorem}
Let $D$ be a central mof on $\c{A}_X$ and suppose that $X$ has at least two points. Then $D$ is a Lipschitz operator field w.r.t. $D\times D$
and $\|D\|_{D\times D}=1$. Moreover if $D$ is bounded then $D\in\b{Lip}(D\times D)$.
\end{theorem}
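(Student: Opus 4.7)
The plan is to apply Lemma~\ref{l1} to reduce the statement $\|D\|_{D\times D}=1$ to producing the least $r>0$ such that
\begin{equation*}
|D_{x,y}\ot1_{x'}\ot1_{y'}-1_x\ot1_y\ot D_{x',y'}|\ \leq\ r\,(D\times D)_{(x,y),(x',y')}
\end{equation*}
holds in $(A_x\ot A_y)\ot(A_{x'}\ot A_{y'})$ for every $(x,y)\neq(x',y')$ in $X\times X$. Lemma~\ref{l1} is applicable here because centrality of $D$ forces $D_{x,y}\in\c{Z}(A_x)\ot\c{Z}(A_y)\subseteq\c{Z}(A_x\ot A_y)$, so the operator field $(x,y)\mapsto D_{x,y}$ commutes with the mof $D\times D$.

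For the upper bound $r\leq1$, I would imitate the classical chain $d(x,y)\leq d(x,x')+d(x',y')+d(y,y')$ by applying the operator triangle axiom Definition~\ref{d1}(iv) twice: first with intermediate point $x'$ to bound $D_{x,y}$ (with a unit inserted at $x'$) above by $D_{x,x'}\ot1_y+1_x\ot D_{x',y}$, then with intermediate point $y'$ applied to $D_{x',y}$, using Definition~\ref{d1}(iii) to replace $D_{y',y}$ by $D_{y,y'}$ under the flip. After lifting everything to the common four-fold tensor algebra one obtains
\begin{equation*}
D_{x,y}\ot1_{x'}\ot1_{y'}\ \leq\ 1_x\ot1_y\ot D_{x',y'}+(D\times D)_{(x,y),(x',y')}.
\end{equation*}
Exchanging $(x,y)\leftrightarrow(x',y')$ yields the symmetric bound, and combining the two (via $-c\leq a\leq c\Leftrightarrow|a|\leq c$ for self-adjoint $a$ and positive $c$) gives the desired inequality with $r=1$.

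For the lower bound $r\geq1$, I would fix two distinct points $x_0,x_1\in X$ (using the hypothesis that $X$ has at least two points) and test the inequality at $(x,y)=(x_0,x_0)$ and $(x',y')=(x_0,x_1)$, then apply the state $\nu:=\mu_{x_0}\ot\mu_{x_0}\ot\mu_{x_0}\ot\mu_{x_1}$ built from a family of metric-states. Every term in which $D_{x_0,x_0}$ is paired against two copies of $\mu_{x_0}$ vanishes by Definition~\ref{d1}(i), while $\nu$ applied to $1_{x_0}\ot1_{x_0}\ot D_{x_0,x_1}$ equals $D^\mu(x_0,x_1)>0$ by Definition~\ref{d1}(ii). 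Consequently the right-hand side evaluates to $r\,D^\mu(x_0,x_1)$, while $|\nu(a)|\leq\nu(|a|)$ applied to the self-adjoint element inside the absolute value on the left produces the lower bound $D^\mu(x_0,x_1)$, forcing $r\geq1$.

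The moreover clause is immediate: boundedness of $D$ places it in $\ell_\infty(\c{A}_X\ot\c{A}_X)$, hence in $\b{L}(D\times D)$, and together with the commutation observation from the first paragraph this gives $D\in\b{Lip}(D\times D)$. The main hurdle will be bookkeeping of the tensor slots --- reconciling the ``$D\times D$-natural'' ordering $A_x\ot A_{x'}\ot A_y\ot A_{y'}$ with the ``Lipschitz-natural'' ordering $A_x\ot A_y\ot A_{x'}\ot A_{y'}$ through the middle flip, and handling the extra flip identification $\f{F}D_{y',y}=D_{y,y'}$ required to glue the two successive triangle inequalities inside a single common C*-algebra.
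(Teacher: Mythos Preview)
Your approach is correct and matches the paper's: the upper bound $\|D\|_{D\times D}\le 1$ is obtained exactly as you outline, by iterating the operator triangle inequality of Definition~\ref{d1}(iv) through an intermediate point and then invoking Lemma~\ref{l1} (the paper writes this with explicit flip isomorphisms $\f{F}_{i\leftrightarrow j}$ to track the tensor slots, which is precisely the bookkeeping you anticipate). The paper dismisses the lower bound and the boundedness clause as ``trivial''; your metric-state test at $(x_0,x_0)$ versus $(x_0,x_1)$ is a clean way to make that precise, and relies only on the inequality $|\nu(a)|\le\nu(|a|)$ for self-adjoint $a$, which is valid here since all entries of $D$ are positive (hence self-adjoint) by hypothesis.
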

\begin{proof}
Let $\f{F}_{i\leftrightarrow j}$ denote *-isomorphism between tensor products of C*-algebras which switches between $i$'th and $j$'th components.
By triangle inequality we have
\begin{equation*}
\begin{split}
&D_{x,y}\ot1_{x'}\ot1_{y'}\\
\leq&\f{F}_{2\leftrightarrow 3}(D_{x,x'}\ot1_{y}\ot1_{y'}+1_x\ot D_{x',y}\ot1_{y'})\\
\leq&\f{F}_{2\leftrightarrow 3}[D_{x,x'}\ot1_{y}\ot1_{y'}+\f{F}_{3\leftrightarrow 4}(1_x\ot D_{x',y'}\ot1_{y}+1_x\ot1_{x'}\ot D_{y',y})]\\
=&\f{F}_{2\leftrightarrow 3}(D_{x,x'}\ot1_{y}\ot1_{y'}+1_x\ot1_{x'}\ot D_{y,y'})+1_x\ot1_y\ot D_{x',y'}\\
=&(D\times D)_{(x,y),(x',y')}+1_x\ot1_y\ot D_{x',y'}.
\end{split}
\end{equation*}
It follows that $|D_{x,y}\ot1_{x'}\ot1_{y'}-1_x\ot1_y\ot D_{x',y'}|\leq(D\times D)_{(x,y),(x',y')}$. Thus by Lemma \ref{l1}
we have $\|D\|_{D\times D}\leq1$. Other parts of the theorem are trivial.
\end{proof}
Let $\c{A}_X$ be a bundle of C*-algebras and $d$ be an ordinary metric on $X$. We want to compare Lipschitz algebras
$\b{Lip}(d)$ and $\b{Lip}(\tilde{d})$ where $\tilde{d}$ denotes the scalar valued mof associated to $d$ and $\c{A}_X$.
We begin with a simple result:
\begin{proposition}\label{4-282320}
Together with the above assumptions suppose that the operator field $f\in\b{F}(\c{A}_X)$ is normal i.e. $ff^*=f^*f$.
Then $f$ is Lipschitz w.r.t. $\tilde{d}$ iff there is a real constant $r>0$ such that
\begin{equation}\label{e4-282300}
\sup_{\lambda\in\sigma(f(x)),\lambda'\in\sigma(f(x'))}|\lambda-\lambda'|<rd(x,x')\quad(x,x'\in X,x\neq x').
\end{equation}
In this case $\|f\|_D$ is equal to least number $r>0$ satisfying (\ref{e4-282300}).
\end{proposition}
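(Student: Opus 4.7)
The plan is to reduce to Lemma \ref{l1} and then compute the norm of the commutator-like element $f(x)\otimes 1_{x'}-1_x\otimes f(x')$ using joint spectral theory for commuting normal operators.

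First, I observe that $\tilde{d}_{x,x'}=d(x,x')\,1_x\otimes 1_{x'}$ is a scalar multiple of the unit, hence lies in the center and commutes with every operator field. In particular, Lemma \ref{l1} applies to \emph{any} $f\in\b{F}(\c{A}_X)$ (no normality needed yet), giving that $f$ is Lipschitz w.r.t.\ $\tilde d$ with $\|f\|_{\tilde d}\le r$ iff
\[
\bigl|f(x)\otimes 1_{x'}-1_x\otimes f(x')\bigr|\le r\,d(x,x')\,1_x\otimes 1_{x'}\qquad(x\neq x').
\]
Since the left-hand side is the absolute value of a self-adjoint element, this inequality is equivalent to the scalar norm inequality
\[
\bigl\|f(x)\otimes 1_{x'}-1_x\otimes f(x')\bigr\|\le r\,d(x,x').
\]

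Next I use the normality hypothesis. If $f(x)$ and $f(x')$ are normal, then $a:=f(x)\otimes 1_{x'}$ and $b:=1_x\otimes f(x')$ are commuting normal elements of $A_x\ot A_{x'}$. Consider the unital commutative C*-subalgebra $C$ they generate; by Gelfand duality, its character space maps into the joint spectrum of $(a,b)$, which is easily identified with $\sigma(f(x))\times\sigma(f(x'))$ (the spectrum of $a$ equals $\sigma(f(x))$ and similarly for $b$, and joint characters must pair one with the other since they are independent tensor factors). Applying functional calculus to the function $(\lambda,\lambda')\mapsto\lambda-\lambda'$ yields
\[
\sigma\bigl(f(x)\otimes 1_{x'}-1_x\otimes f(x')\bigr)=\{\lambda-\lambda':\lambda\in\sigma(f(x)),\,\lambda'\in\sigma(f(x'))\}.
\]
Because $a-b$ is normal, its norm equals its spectral radius, so
\[
\bigl\|f(x)\otimes 1_{x'}-1_x\otimes f(x')\bigr\|=\sup_{\lambda\in\sigma(f(x)),\,\lambda'\in\sigma(f(x'))}|\lambda-\lambda'|.
\]

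Combining these two displays gives exactly the stated equivalence, and taking the infimum over $r$ identifies $\|f\|_{\tilde d}$ with the least constant satisfying (\ref{e4-282300}). The only real step that is not bookkeeping is the joint spectrum computation, but for commuting normals $a\otimes 1$ and $1\otimes b$ this is a clean application of Gelfand theory, so I do not expect a genuine obstacle; the main care is just to keep the normality assumption in play (it is needed so that $a-b$ is itself normal, and hence its norm coincides with its spectral radius).
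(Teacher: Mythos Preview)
Your proof is correct and follows essentially the same route as the paper: reduce to Lemma~\ref{l1} (noting that $\tilde d$ is scalar-valued hence central), then use Gelfand theory on the commutative C*-subalgebra generated by $f(x)\otimes 1_{x'}$ and $1_x\otimes f(x')$ to identify $\|f(x)\otimes 1_{x'}-1_x\otimes f(x')\|$ with the supremum in~(\ref{e4-282300}). One tiny slip: you wrote ``absolute value of a self-adjoint element'' where the element is only normal---but since $|c|\le r\cdot 1\iff\|c\|\le r$ holds for any $c$ in a C*-algebra, this is harmless.
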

\begin{proof}
The C*-subalgebra of $A_x$ generated by $f(x)$ and $1_x$ is canonically identified with $\b{C}(\sigma(f(x))$.
Via this identification $f(x)$ is distinguished by the identity function $\lambda\mapsto\lambda$ in $\b{C}(\sigma(f(x))$.
Thus $f(x)\ot1_{x'}$ is identified by the function $(\lambda,\lambda')\mapsto\lambda$ in the C*-algebra
$$\b{C}(\sigma(f(x))\times\sigma(f(x')))\cong\b{C}(\sigma(f(x))\ot\b{C}(\sigma(f(x')).$$
Analogously $1_x\ot f(x')$ is identified by the function $(\lambda,\lambda')\mapsto\lambda'$ on $\sigma(f(x))\times\sigma(f(x'))$.
Thus $\|f(x)\ot1_{x'}-1_x\ot f(x')\|$ is equal to supremum in (\ref{e4-282300}). Now the proposition follows from Lemma \ref{l1}.
\end{proof}
The following corollary is a direct consequence of Proposition \ref{4-282320}.
\begin{corollary}\label{5091421}
Together with the above assumptions suppose $x\in X$ is a limit point of $X$.
Let $f\in\b{F}(\c{A}_X)$ be a Lipschitz operator field w.r.t. $\tilde{d}$. Then $f(x)$ is a scalar multiple of $1_x$.
\end{corollary}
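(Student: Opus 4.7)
The plan is to apply Proposition~\ref{4-282320} after reducing from a general $f\in\b{F}(\c{A}_X)$ to a normal one. Using that $x$ is a limit point of $(X,d)$, pick a sequence $(x_n)\subset X\setminus\{x\}$ with $d(x,x_n)\to 0$. The objective is to show that $f(x)$ lies in $\mathbb{C}\cdot 1_x$.

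For the reduction, decompose $f=g+ih$ where $g:=(f+f^{*})/2$ and $h:=(f-f^{*})/(2i)$ are pointwise self-adjoint (hence normal). Setting $A_{x,x'}:=f(x)\ot 1_{x'}-1_x\ot f(x')$, one verifies that $g(x)\ot 1_{x'}-1_x\ot g(x')=\tfrac{1}{2}(A_{x,x'}+A_{x,x'}^{*})$, and analogously $h(x)\ot 1_{x'}-1_x\ot h(x')=\tfrac{1}{2i}(A_{x,x'}-A_{x,x'}^{*})$, since the involution on $A_x\ot A_{x'}$ restricts tensor-factorwise. Both right-hand sides have norm at most $\|A_{x,x'}\|$, so $\|g\|_{\tilde{d}},\|h\|_{\tilde{d}}\leq\|f\|_{\tilde{d}}$. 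It therefore suffices to prove the corollary for self-adjoint $f$; reassembling $f=g+ih$ then handles the general case.

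Assume now $f$ is self-adjoint. Proposition~\ref{4-282320} gives
\[ \sup_{\lambda\in\sigma(f(x)),\,\lambda'\in\sigma(f(x_n))}|\lambda-\lambda'|\leq\|f\|_{\tilde{d}}\,d(x,x_n)\to 0. \]
Fixing any $\lambda'_n\in\sigma(f(x_n))$, the triangle inequality yields $\r{diam}(\sigma(f(x)))\leq 2\sup_{\lambda\in\sigma(f(x))}|\lambda-\lambda'_n|\to 0$, forcing $\sigma(f(x))=\{\lambda_0\}$ for some $\lambda_0\in\mathbb{C}$. Since $f(x)-\lambda_0 1_x$ is self-adjoint with spectrum $\{0\}$, its norm (which equals its spectral radius) vanishes, so $f(x)=\lambda_0 1_x$. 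The main (and essentially only) obstacle is the reduction step, which is forced because Proposition~\ref{4-282320} requires normality while the corollary as stated does not; once that reduction is in place, everything else is a direct application of Proposition~\ref{4-282320} together with the standard C*-algebraic fact that a self-adjoint element with singleton spectrum is a scalar.
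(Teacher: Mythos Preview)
Your proof is correct and follows essentially the same approach as the paper's: reduce to the self-adjoint (hence normal) case via the decomposition $f=\tfrac{1}{2}(f+f^*)+i\cdot\tfrac{1}{2i}(f-f^*)$, then apply Proposition~\ref{4-282320} to see that the spectrum $\sigma(f(x))$ must be a singleton when $x$ is a limit point. The paper presents the normal case first and the reduction afterward, and is terser about why the real and imaginary parts remain Lipschitz, but the substance is identical.
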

\begin{proof}
Firstly suppose that $f$ is also normal. Then the limit of the supremum in (\ref{e4-282300}) when $x'\to x$ is zero.
It is possible only if $\sigma(f(x))$ be singleton. Thus $f(x)$ must be a scalar multiple of $1_x$. Now suppose that $f$ is an arbitrary Lipschit
operator field. Then $\frac{1}{2}(f+f^*),\frac{1}{2i}(f-f^*)$ are self-adjoint Lipschitz operator fields. Hence
$\frac{1}{2}(f(x)+f^*(x)),\frac{1}{2i}(f(x)-f^*(x))$ are scalar multiples of $1_x$. It follows that $f(x)$ is also a scalar multiple of $1_x$.
\end{proof}
\begin{theorem}
Together with the above assumptions suppose that every point of $X$ is a limit point. Then the assignment $f\mapsto\tilde{f}$
defines a surjective isometric *-isomorphism from $\b{Lip}(d)$ onto $\b{Lip}(\tilde{d})$.
\end{theorem}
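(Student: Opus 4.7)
The plan is to reduce this to Theorem \ref{t5-081316} together with Corollary \ref{5091421}. First I would observe that when $D=\tilde{d}$, the associated norm metric $D^{\|\cdot\|}$ equals $d$: for $x\neq x'$ one has $\|\tilde{d}(x,x')\|=\|d(x,x')1_x\ot1_{x'}\|=d(x,x')$. Therefore Theorem \ref{t5-081316} already supplies an isometric $*$-homomorphism $f\mapsto\tilde{f}$ from $\b{Lip}(d)=\b{Lip}(D^{\|\cdot\|})$ into $\b{Lip}(\tilde{d})$. Thus the only thing to establish is surjectivity.

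To prove surjectivity, I would take an arbitrary $g\in\b{Lip}(\tilde{d})$ and use Corollary \ref{5091421}: since every point of $X$ is a limit point, $g(x)$ is a scalar multiple of $1_x$ for every $x\in X$. Define $f:X\to\mathbb{C}$ by letting $f(x)$ be that scalar, so that $g=\tilde{f}$. Boundedness of $f$ is immediate, since $|f(x)|=\|f(x)1_x\|=\|g(x)\|\leq\|g\|_\infty$. To see $f$ is Lipschitz w.r.t. $d$, one just computes directly: for $x\neq x'$,
\begin{equation*}
\tilde{d}_{x,x'}^{-1/2}(g(x)\ot1_{x'}-1_x\ot g(x'))\tilde{d}_{x,x'}^{-1/2}=\frac{f(x)-f(x')}{d(x,x')}\,1_x\ot1_{x'},
\end{equation*}
so $\|g\|_{\tilde{d}}=\|f\|_d<\infty$, giving $f\in\b{Lip}(d)$.

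There is essentially no hard step; the content of the theorem is packaged into Corollary \ref{5091421}, which in turn uses Proposition \ref{4-282320} and the limit-point hypothesis to force normal Lipschitz operator fields to take scalar values pointwise. The only thing to be mildly careful about is to verify that the identification $g=\tilde{f}$ produces a genuine bounded Lipschitz \emph{function} on $X$, and that the already-known isometry from Theorem \ref{t5-081316} then implies $\|g\|_{\b{Lip}}=\|f\|_{\b{Lip}}$ so no norms deteriorate. After that the map $f\mapsto\tilde{f}$ is a bijective isometric $*$-homomorphism, i.e., an isometric $*$-isomorphism, as claimed.
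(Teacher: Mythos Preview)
Your proposal is correct and follows essentially the same approach as the paper, which simply states that the result follows directly from Proposition \ref{p1}(ii) and Corollary \ref{5091421}. Your use of Theorem \ref{t5-081316} in place of Proposition \ref{p1}(ii) is equivalent (the former is an immediate consequence of the latter), and you have merely supplied additional detail for the surjectivity step that the paper leaves implicit.
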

\begin{proof}
It follows directly from Proposition \ref{p1}(ii) and Corollary \ref{5091421}.
\end{proof}
\section{Dual Lipschitz algebras}\label{s5-031305}
Let $(\c{A}_X,D)$ be a mof space. Let $X^{(2)}:=\{(x,x')\in X\times X:x\neq x'\}$ and let $\c{A}^{(2)}$ denote the bundle
$\{A_x\ot A_{x'}\}_{(x,x')\in X^{(2)}}$ of C*-algebras. Thus $\c{A}^{(2)}_{X^{(2)}}$ is the restriction of $\c{A}_X\ot\c{A}_X$ to $X^{(2)}$.
The `de Leeuw's map' $\Phi$ \cite[Chapter 2]{Weaver1} is defined to be the
map that associates to any operator field $f$ on $\c{A}_X$ the operator field $\Phi(f)$ on $\c{A}^{(2)}_{X^{(2)}}$
given by
$$\Phi(f)(x,x'):=D_{x,x'}^{-1/2}(f(x)\ot1_{x'}-1_x\ot f(x'))D_{x,x'}^{-1/2}.$$
There is a natural $\ell_\infty(\c{A}_X)$-bimodule
structure on $\ell_\infty(\c{A}^{(2)}_{X^{(2)}})$ defined as follows.
$$(f.F)_{x,x'}:=(f(x)\ot1_{x'})F_{x,x'},\quad(F.f)_{x,x'}:=F_{x,x'}(1_x\ot f(x'))$$
for $f\in\ell_\infty(\c{A}_X),F\in\ell_\infty(\c{A}^{(2)}_{X^{(2)}})$.
Thus $\ell_\infty(\c{A}^{(2)}_{X^{(2)}})$ is a Banach $\ell_\infty(\c{A}_X)$-bimodule and hence a Banach $\b{Lip}(D)$-bimodule.
Recall that for a Banach algebra $B$ and a Banach $B$-bimodule $M$ a bounded linear map $T:B\to M$ is called a bounded `derivation'
if $T(bb')=b\cdot T(b')+T(b)\cdot b'$ for every $b,b'\in B$. $T$ is called `inner' derivation if for some $m\in M$ we have $T(b)=b\cdot m-m\cdot b$.
(See \cite{BadeCurtisDales1} for more details.) Similar to the ordinary case \cite[Chapter 2]{Weaver1} we have the following result.
\begin{theorem}\label{t1.7}
Let $(\c{A}_X,D)$ be a (pointed) mof space.
\begin{enumerate}
\item[(i)] $\Phi$ is a linear isometry from $\b{L}_0(D)$ into $\ell_\infty(\c{A}^{(2)}_{X^{(2)}})$.
\item[(ii)] $\Phi$ is a bounded derivation from $\b{Lip}(D)$ into $\ell_\infty(\c{A}^{(2)}_{X^{(2)}})$.
Moreover if $D$ is bounded then $\Phi$ is inner iff $D^{-1}$ is bounded out of the diagonal of $X\times X$.
\end{enumerate}
\end{theorem}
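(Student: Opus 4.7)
Part (i) is immediate from the definitions: by (\ref{e5-081223}), $\|f\|_D=\sup_{(x,x')\in X^{(2)}}\|\Phi(f)_{x,x'}\|=\|\Phi(f)\|_\infty$, while linearity of $\Phi$ is manifest. Hence $\Phi$ embeds $(\b{L}_0(D),\|\cdot\|_D)$ isometrically into $\ell_\infty(\c{A}^{(2)}_{X^{(2)}})$. Boundedness of $\Phi$ as a map from $\b{Lip}(D)$ then comes for free, since $\|\Phi(f)\|_\infty=\|f\|_D\leq\|f\|_\b{Lip}$.

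To prove the derivation property in (ii), I will fix $f,g\in\b{Lip}(D)$ and $(x,x')\in X^{(2)}$, insert $\pm(f(x)\ot g(x'))$ inside the bracket $f(x)g(x)\ot 1-1\ot f(x')g(x')$, and split into two summands. Because both $f$ and $g$ commute with $D$ (and hence with $D_{x,x'}^{-1/2}$), I can move $f(x)\ot 1$ past the outer $D^{-1/2}$ in the first summand and $1\ot g(x')$ past the outer $D^{-1/2}$ in the second, recovering exactly $(f.\Phi(g))_{x,x'}+(\Phi(f).g)_{x,x'}$.

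For the forward half of the inner characterization, I simply take $F_{x,x'}:=D^{-1}_{x,x'}$: under the hypothesis that $D^{-1}$ is bounded on $X^{(2)}$, this $F$ lies in $\ell_\infty(\c{A}^{(2)}_{X^{(2)}})$, and the same commutation trick as above yields $(f.F-F.f)_{x,x'}=D^{-1/2}(f(x)\ot 1-1\ot f(x'))D^{-1/2}=\Phi(f)_{x,x'}$ for every $f\in\b{Lip}(D)$.

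The reverse half --- showing $D^{-1}$ is bounded whenever $D$ is bounded and $\Phi$ is inner --- is the main obstacle, because the algebra $\b{Lip}(D)$ is cut down by the commutation condition and could in principle be small. My plan is to test the relation $\Phi(f)=f.F-F.f$ against scalar Lipschitz fields that separate each prescribed pair. Fix $(x_0,x_0')\in X^{(2)}$ and set $c(y):=\|D_{x_0,y}\|=D^{\|\cdot\|}(x_0,y)$. Then $c$ is $1$-Lipschitz for $D^{\|\cdot\|}$ by the reverse triangle inequality on $(X,D^{\|\cdot\|})$, and $c$ is bounded because $D$ is; Proposition \ref{p1}(ii) therefore places $\tilde c$ in $\b{Lip}(D)$. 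Computing $\Phi(\tilde c)_{x_0,x_0'}$ in two ways --- directly, obtaining $(c(x_0)-c(x_0'))D^{-1}_{x_0,x_0'}$, and via the inner formula, obtaining $(c(x_0)-c(x_0'))F_{x_0,x_0'}$ --- and noting $c(x_0)-c(x_0')=-\|D_{x_0,x_0'}\|\neq 0$ (since $D_{x_0,x_0'}$ is positive and invertible by Definition \ref{d1}(ii)), forces $F_{x_0,x_0'}=D^{-1}_{x_0,x_0'}$. Since this holds at every pair, $\|D^{-1}\|_\infty\leq\|F\|_\infty<\infty$.
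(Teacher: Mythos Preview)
Your proposal is correct and follows essentially the same route as the paper: the paper dismisses (i) and the derivation identity as ``easily seen'' and, for the inner characterization, tests the relation $\Phi(f)=f.F-F.f$ against the scalar Lipschitz fields $\tilde{f_x}$ with $f_x(\cdot)=D^{\|\cdot\|}(x,\cdot)$, obtaining $\|D_{x',x}\|D^{-1}_{x',x}=\|D_{x',x}\|F_{x',x}$ and hence $F=D^{-1}$. Your argument is the same one with the roles of the two coordinates swapped; the extra detail you supply for the derivation identity (using that $f,g$ commute with $D_{x,x'}^{-1/2}$) is exactly what the paper's inequality (\ref{i1}) encodes.
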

\begin{proof}
(i) and first part of (ii) are easily seen. Suppose that $D$ is bounded and $\Phi$ is inner. Let $F\in\ell_\infty(\c{A}^{(2)}_{X^{(2)}})$
be such that
$$\Phi(f)(x,x')=(f(x)\ot1_x)F_{x,x'}-F_{x,x'}(1_x\ot f(x'))$$
for every $f\in\b{Lip}(D)$ and every $x\neq x'$. For every $x$ let
$f_x:X\to[0,\infty)$ be defined by $f_x(x'):=D^{\|\cdot\|}(x,x')$. Then $f_x\in\b{Lip}(D^{\|\cdot\|})$ and hence by Proposition \ref{p1}(ii),
$\tilde{f_x}\in\b{Lip}(D)$. For $x'\neq x$ by definition of $\Phi$ we have
$\Phi(\tilde{f_x})(x',x)=\|D_{x',x}\|D^{-1}_{x',x}$. On the other hand we have $\Phi(\tilde{f_x})(x',x)=\|D_{x',x}\|F_{x',x}$. This shows that
$F=D^{-1}$ and hence $D^{-1}$ is bounded out of the diagonal of $X\times X$. The converse is trivial and the proof is complete.
\end{proof}
From here to the end of this section we suppose that $(\c{A}_X,D)$ is a pointed mof space such that $\c{A}_X$ is a bundle of von Neumann algebras.
We also let $x_0$ denote the distinguished element of $X$. For every $x$ we denote by $A_x^0$ the predual of $A_x$ and by
$\c{A}^0$ the bundle $\{A_x^0\}$ of Banach spaces on $X$. We have a canonical isometric isomorphism between $\ell_1(\c{A}^0_X)^*$
and $\ell_\infty(\c{A}_X)$. Thus the C*-algebra $\ell_\infty(\c{A}_X)$ is a von Neumann algebra. Let $A_x\bar{\ot} A_{x'}$ denote
von Neumann tensor product of $A_x$ and $A_{x'}$. We have a canonical inclusion $A_x\ot A_{x'}\subseteq A_x\bar{\ot} A_{x'}$.
Let $\c{A}^{\bar{(2)}}$ denote the field $\{A_x\bar{\ot} A_{x'}\}$ of von Neumann algebras on $X^{(2)}$. Then
$\ell_\infty(\c{A}^{\bar{(2)}}_{X^{(2)}})$ is a von Neumann algebra and we have a canonical inclusion
$\ell_\infty(\c{A}^{(2)}_{X^{(2)}})\subseteq\ell_\infty(\c{A}^{\bar{(2)}}_{X^{(2)}})$.
The following lemma is similar to \cite[Lemma 2.1.2]{Weaver1}.
\begin{lemma}\label{l4-25}
Let $f,(f_i)_i\in\b{L}_0(D)$. Suppose that $\xymatrix{f_i(x)\ar[r]^-{\r{weak*}}&f(x)}$ for every $x$.
Then we have $\xymatrix{\Phi(f_i)\ar[r]^-{\r{weak*}}&\Phi(f)}$ in $\ell_\infty(\c{A}^{\bar{(2)}}_{X^{(2)}})$.
\end{lemma}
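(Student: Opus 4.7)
The plan is to reduce the claimed weak* convergence in $\ell_\infty(\c{A}^{\bar{(2)}}_{X^{(2)}})$ to two ingredients: (1) uniform boundedness of the net $(\Phi(f_i))$ in norm, and (2) fiberwise weak* convergence $\Phi(f_i)(x,x') \to \Phi(f)(x,x')$ in $A_x \bar{\ot} A_{x'}$ for each fixed $(x,x') \in X^{(2)}$. The predual of $\ell_\infty(\c{A}^{\bar{(2)}}_{X^{(2)}})$ is the $\ell_1$-sum over $(x,x') \in X^{(2)}$ of the preduals of $A_x \bar{\ot} A_{x'}$, so weak* convergence amounts to the statement that
\[ \sum_{(x,x')} \omega_{x,x'}\bigl(\Phi(f_i)(x,x')\bigr) \longrightarrow \sum_{(x,x')} \omega_{x,x'}\bigl(\Phi(f)(x,x')\bigr) \]
for every absolutely summable family $(\omega_{x,x'})$ of normal functionals.

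For the fiberwise convergence I would fix $(x,x') \in X^{(2)}$ and expand
\[ \Phi(f_i)(x,x') - \Phi(f)(x,x') = D_{x,x'}^{-1/2}\bigl((f_i(x)-f(x)) \ot 1_{x'} - 1_x \ot (f_i(x')-f(x'))\bigr)D_{x,x'}^{-1/2}. \]
Since $f_i(x) \to f(x)$ weak* in $A_x$ (hence the sequence is norm-bounded by Banach--Steinhaus on $A_x^0$), the slice $(f_i(x)-f(x)) \ot 1_{x'}$ tends weak* to zero in $A_x \bar{\ot} A_{x'}$: this is immediate on elementary tensors $\phi \ot \psi$ in the predual, and extends to arbitrary normal functionals by norm density of elementary tensors together with the pointwise uniform bound just established. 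The second term is handled symmetrically. Finally, left and right multiplication by the fixed bounded element $D_{x,x'}^{-1/2}$ are weak*-continuous on $A_x \bar{\ot} A_{x'}$, as they are the duals of bounded operators on the predual, so the conjugation preserves weak* convergence.

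For the global assembly, assume the net is uniformly bounded in Lipschitz norm, say $\|f_i\|_D \leq M$, which by Theorem \ref{t1.7}(i) is the same as $\|\Phi(f_i)\|_\infty \leq M$. Fix $\omega = (\omega_{x,x'})$ in the predual. Given $\epsilon > 0$, choose a finite subset $F \subset X^{(2)}$ with $\sum_{(x,x') \notin F} \|\omega_{x,x'}\| < \epsilon/(M + \|\Phi(f)\|_\infty)$. The tail sum is then bounded by $\epsilon$ uniformly in $i$, while on $F$ the fiberwise weak* convergence from the previous step gives $\omega_{x,x'}(\Phi(f_i)(x,x')) \to \omega_{x,x'}(\Phi(f)(x,x'))$ for each of the finitely many indices. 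A standard $\epsilon$-argument then yields the required convergence of the full sum.

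The main obstacle, and the only delicate point, is the uniform boundedness of the Lipschitz seminorms $\|f_i\|_D$. This does not follow from pointwise weak* convergence of $(f_i(x))$ alone: the pointwise Banach--Steinhaus gives $\sup_i \|f_i(x)\| < \infty$ for each $x$ separately, but uniform control across $x$ of the Lipschitz constants is a genuinely stronger statement. I would therefore treat it as an implicit hypothesis on the net $(f_i)$, paralleling the explicit boundedness assumption in Weaver's classical \cite[Lemma 2.1.2]{Weaver1}; once it is in place, the preceding two-step reduction completes the proof.
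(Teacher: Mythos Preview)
Your argument is correct and shares the paper's overall strategy---reduce to fiberwise weak* convergence in each $A_x\bar\ot A_{x'}$, then invoke weak*-continuity of multiplication by the fixed element $D_{x,x'}^{-1/2}$---but differs in two places. For the fiberwise step the paper is shorter: instead of approximating a normal functional $\alpha$ on $A_x\bar\ot A_{x'}$ by elementary tensors and using a pointwise norm bound from Banach--Steinhaus, it simply observes that the restriction of $\alpha$ along the embedding $a\mapsto a\ot 1_{x'}$ is itself a normal functional on $A_x$, so the hypothesis $f_i(x)\to f(x)$ weak* directly gives $\alpha(f_i(x)\ot 1_{x'})\to\alpha(f(x)\ot 1_{x'})$ with no density argument needed. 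For the global assembly, by contrast, the paper is \emph{less} careful than you: it just asserts at the outset that weak* convergence in $\ell_\infty(\c{A}^{\bar{(2)}}_{X^{(2)}})$ is equivalent to pointwise weak* convergence at each $(x,x')$, never addressing the uniform-boundedness issue you flag. Your caveat is well taken; the lemma's only application (in Theorem~\ref{t3}) is to a net that is explicitly assumed $\|\cdot\|_\infty$-bounded, so in context the omission is harmless.
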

\begin{proof}
First of all note that $\xymatrix{F_i\ar[r]^-{\r{weak*}}&F}$ in $\ell_\infty(\c{A}^{\bar{(2)}}_{X^{(2)}})$ iff
$$\xymatrix{F_i(x,x')\ar[r]^-{\r{weak*}}&F(x,x')}$$
in $A_x\bar{\ot} A_{x'}$ for every $x,x'$ with $x\neq x'$. Thus we must show that
\begin{equation}\label{e4-25}
\begin{split}
\alpha(D_{x,x'}^{-1/2}(f_i(x)\ot1_{x'}-1_x\ot f_i(x'&))D_{x,x'}^{-1/2})\to\\
&\alpha(D_{x,x'}^{-1/2}(f(x)\ot1_{x'}-1_x\ot f(x'))D_{x,x'}^{-1/2})
\end{split}
\end{equation}
for every normal functional $\alpha$ on $A_x\bar{\ot} A_{x'}$. The restriction of any $\alpha$ to $A_x$ (via the canonical embedding $a\mapsto a\ot1_{x'}$
from $A_x$ into $A_x\bar{\ot}A_{x'}$) is a normal functional on $A_x$. Hence by assumption we have
$$\alpha(f_i(x)\ot1_{x'})\to\alpha(f(x)\ot1_{x'}),\quad\alpha(1_x\ot f_i(x'))\to\alpha(1_x\ot f(x')).$$
Now we can conclude the validity of (\ref{e4-25}) from the fact that left and right multiplication by a fixed element in a von Neumann algebra
is weak*-continuous \cite[Proposition 3.6.2]{Pedersen1}.
\end{proof}
\begin{lemma}\label{l3}
Let $(f_i)_i$ be a $\|\cdot\|_D$-bounded net in $\b{L}_0(D)$ and $f\in\b{F}(\c{A}_X)$ be an operator field. Suppose that
$\xymatrix{f_i(x)\ar[r]^-{\r{weak*}}&f(x)}$ for every $x$. Then $f$ belongs to $\b{L}_0(D)$. Moreover if $f_i\in\b{Lip}_0(D)$ for every $i$
then $f\in\b{Lip}_0(D)$.
\end{lemma}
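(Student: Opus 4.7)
The plan is to combine Lemma \ref{l4-25} with the weak*-lower semicontinuity of the norm in the von Neumann algebra $\ell_\infty(\c{A}^{\bar{(2)}}_{X^{(2)}})$, and to use separate weak*-continuity of multiplication for the commutation claim. First I would note that specializing the pointwise weak* convergence to $x=x_0$ together with $f_i(x_0)=0$ immediately forces $f(x_0)=0$, so the only remaining issue is to bound $\|f\|_D$ and (in the second part) to verify that $f$ commutes with $D$.

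For the bound on $\|f\|_D$, let $M:=\sup_i\|f_i\|_D<\infty$. Since $\|\Phi(g)\|_\infty=\|g\|_D$ for $g\in\b{L}_0(D)$, the net $(\Phi(f_i))_i$ lies in the closed ball of radius $M$ in $\ell_\infty(\c{A}^{(2)}_{X^{(2)}})\subseteq\ell_\infty(\c{A}^{\bar{(2)}}_{X^{(2)}})$. By Lemma \ref{l4-25} we have $\Phi(f_i)\to\Phi(f)$ weak* in $\ell_\infty(\c{A}^{\bar{(2)}}_{X^{(2)}})$. The closed ball of radius $M$ in a von Neumann algebra is weak*-closed (Banach--Alaoglu via the predual), so $\|\Phi(f)\|_\infty\leq M$. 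Since the pointwise formula for $\Phi(f)$ keeps it inside $\ell_\infty(\c{A}^{(2)}_{X^{(2)}})$, this gives $\|f\|_D=\|\Phi(f)\|_\infty\leq M<\infty$, and therefore $f\in\b{L}_0(D)$.

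For the second assertion, suppose each $f_i$ commutes with $D$, i.e.\ $(f_i(x)\ot1_{x'})D_{x,x'}=D_{x,x'}(f_i(x)\ot1_{x'})$ in $A_x\ot A_{x'}$ for every $x,x'$. Fix such $x,x'$. The pointwise weak* convergence $f_i(x)\to f(x)$ in $A_x$ implies $f_i(x)\ot1_{x'}\to f(x)\ot1_{x'}$ weak* in $A_x\bar{\ot} A_{x'}$, because the restriction of any normal functional on $A_x\bar{\ot} A_{x'}$ to the copy $A_x\ot 1_{x'}$ is a normal functional on $A_x$. By the separate weak*-continuity of left and right multiplication in the von Neumann algebra $A_x\bar{\ot} A_{x'}$ \cite[Proposition 3.6.2]{Pedersen1}, both sides of the commutation equation pass to the limit, giving $(f(x)\ot1_{x'})D_{x,x'}=D_{x,x'}(f(x)\ot1_{x'})$. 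Hence $f$ commutes with $D$ and $f\in\b{Lip}_0(D)$.

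The only mildly delicate point is that $\Phi(f_i)\to\Phi(f)$ is only guaranteed to be weak* convergent in the larger algebra $\ell_\infty(\c{A}^{\bar{(2)}}_{X^{(2)}})$, not in $\ell_\infty(\c{A}^{(2)}_{X^{(2)}})$; but since $\Phi(f)$ is defined by a pointwise algebraic formula in the spatial tensor products and the unit ball of $\ell_\infty(\c{A}^{\bar{(2)}}_{X^{(2)}})$ is weak*-closed, the norm bound transfers without any need to move back to the smaller algebra. Apart from this bookkeeping the argument is a direct weak*-limit transfer, so I do not expect any serious obstacle beyond correctly invoking Lemma \ref{l4-25}.
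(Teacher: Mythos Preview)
Your approach is essentially the same as the paper's, but there is one technical slip worth flagging: you invoke Lemma~\ref{l4-25} to conclude $\Phi(f_i)\to\Phi(f)$ weak* in $\ell_\infty(\c{A}^{\bar{(2)}}_{X^{(2)}})$, yet that lemma has the hypothesis $f\in\b{L}_0(D)$, which is precisely what you are trying to establish. At this stage you do not even know that $\Phi(f)$ lies in $\ell_\infty(\c{A}^{\bar{(2)}}_{X^{(2)}})$, so speaking of weak* convergence there to $\Phi(f)$ is premature, and the ``weak*-closed ball'' argument as stated is circular.

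The paper sidesteps this by not citing Lemma~\ref{l4-25} itself but rather reusing its \emph{proof}: for each fixed $(x,x')$ with $x\neq x'$ and each normal functional $\alpha$ on $A_x\bar{\ot}A_{x'}$, the same computation gives $\alpha(\Phi(f_i)(x,x'))\to\alpha(\Phi(f)(x,x'))$, and then
\[
\|\Phi(f)(x,x')\|=\sup_{\|\alpha\|\leq1}|\alpha(\Phi(f)(x,x'))|\leq M
\]
pointwise, which yields $\|f\|_D\leq M$ directly. Your argument becomes correct once you replace the appeal to Lemma~\ref{l4-25} by this pointwise version (which needs no hypothesis on $f$). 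The commutation step you give---passing to the weak* limit in $(f_i(x)\ot1_{x'})D_{x,x'}=D_{x,x'}(f_i(x)\ot1_{x'})$ via separate weak*-continuity of multiplication---is fine and equivalent to the paper's version, which checks commutation with $D_{x,x'}^{-1/2}$ instead.
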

\begin{proof}
Let $M>0$ be a bound for $\|f_i\|_D$'s. As in the proof of Lemma \ref{l4-25} we can see that the convergence in (\ref{e4-25}) is satisfied
for every normal functional $\alpha$ on $A_x\bar{\ot}A_{x'}$. Thus we have
\begin{equation*}
\begin{split}
&\|D_{x,x'}^{-1/2}(f(x)\ot1_{x'}-1_x\ot f(x'))D_{x,x'}^{-1/2}\|\\
=&\sup_{\|\alpha\|\leq1}|\alpha(D_{x,x'}^{-1/2}(f(x)\ot1_{x'}-1_x\ot f(x'))D_{x,x'}^{-1/2})|\\
\leq& M.
\end{split}
\end{equation*}
This implies that $\|f\|_D\leq M$. Suppose that $f_i\in\b{Lip}_0(D)$. For every $\alpha$ as above we have
\begin{equation*}
\begin{split}
\alpha(D_{x,x'}^{-1/2}(f(x)\ot1_{x'}))&=\lim_i\alpha(D_{x,x'}^{-1/2}(f_i(x)\ot1_{x'}))\\
&=\lim_i\alpha((f_i(x)\ot1_{x'})D_{x,x'}^{-1/2})\\
&=\alpha((f(x)\ot1_{x'})D_{x,x'}^{-1/2}).
\end{split}
\end{equation*}
Thus $f\in\b{Lip}_0(D)$. The proof is complete.
\end{proof}
\begin{theorem}\label{t3}
Let $(\c{A}_X,D)$ be a pointed mof space such that $\c{A}_X$ is a field of von Neumann algebras.
Then $\b{L}_0(D)$ and $\b{Lip}_0(D)$ are dual Banach spaces.
\end{theorem}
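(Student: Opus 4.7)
The plan is to realize $\b{L}_0(D)$ and $\b{Lip}_0(D)$ as weak*-closed subspaces of the dual Banach space $\ell_\infty(\c{A}^{\bar{(2)}}_{X^{(2)}})$ via de Leeuw's map $\Phi$. Once this is established, standard duality identifies any weak*-closed subspace of $F^*$ with the dual of $F$ modulo its pre-annihilator, producing the required predual. Theorem \ref{t1.7}(i) already supplies the isometric linear embedding of $(\b{L}_0(D),\|\cdot\|_D)$ into $\ell_\infty(\c{A}^{(2)}_{X^{(2)}})\subseteq\ell_\infty(\c{A}^{\bar{(2)}}_{X^{(2)}})$, and the restriction to $\b{Lip}_0(D)$ inherits the same property. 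What remains is purely topological: show that the images are weak*-closed.

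By the Krein--Smulian theorem, a convex subset of a dual Banach space is weak*-closed iff its intersection with every closed ball is weak*-closed. Since $\Phi$ is an isometry, the problem reduces to proving that $\Phi(B_1)$ is weak*-closed, where $B_1:=\{f\in\b{L}_0(D):\|f\|_D\leq1\}$. So I fix a net $(f_i)\subseteq B_1$ with $\Phi(f_i)\to G$ weak* in $\ell_\infty(\c{A}^{\bar{(2)}}_{X^{(2)}})$ and must exhibit $f\in B_1$ with $G=\Phi(f)$.

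The key mechanism is pointwise weak* compactness in each fibre. Inequality (\ref{i1.5}) applied with $g=0$ gives $\|f_i(x)\|\leq\|f_i\|_D\|D_{x,x_0}\|\leq\|D_{x,x_0}\|$ uniformly in $i$, so each slice $\{f_i(x)\}_i$ lies in a fixed weak*-compact ball of the von Neumann algebra $A_x$. Tychonoff applied to the product of these balls, endowed with the product of the weak* topologies, yields a subnet of $(f_i)$ and an operator field $f\in\b{F}(\c{A}_X)$ with $f(x_0)=0$ and $\|f(x)\|\leq\|D_{x,x_0}\|$ such that $f_i(x)\to f(x)$ weak* for every $x$ simultaneously. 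Lemma \ref{l3} then places $f$ in $\b{L}_0(D)$ with $\|f\|_D\leq1$, i.e., in $B_1$. Lemma \ref{l4-25} forces $\Phi(f_i)\to\Phi(f)$ weak* along this subnet, and uniqueness of weak* limits gives $G=\Phi(f)$, so $G\in\Phi(B_1)$ as required.

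The $\b{Lip}_0(D)$ case runs in parallel, invoking the second assertion of Lemma \ref{l3} to propagate commutation with $D$ through the pointwise weak* limit. The main obstacle I anticipate is really the subnet/compactness step: without the von Neumann hypothesis on each $A_x$ there is no natural weak* topology on the fibres in which bounded sets are compact, Tychonoff fails to produce the candidate limit $f$, and there is no reason for $\Phi(B_1)$ to be weak*-closed. This is precisely where the hypothesis that $\c{A}_X$ is a bundle of von Neumann algebras enters the argument.
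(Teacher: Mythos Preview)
Your argument is correct and follows the same overall strategy as the paper: embed $L$ (either $\b{L}_0(D)$ or $\b{Lip}_0(D)$) isometrically into $\ell_\infty(\c{A}^{\bar{(2)}}_{X^{(2)}})$ via $\Phi$, invoke Krein--Smulian, and close with Lemmas~\ref{l3} and~\ref{l4-25}. The one point of divergence is how the candidate limit $f$ is produced. You bound $\|f_i(x)\|$ via (\ref{i1.5}), apply Tychonoff to the product of weak*-compact balls, and pass to a subnet that converges pointwise weak*. The paper instead reads $f$ off directly from the given limit $G$: since $\Phi(f_i)(x,x_0)\to G(x,x_0)$ weak*, multiplying by $D_{x,x_0}^{1/2}$ on both sides gives $f_i(x)\ot1_{x_0}\to D_{x,x_0}^{1/2}G(x,x_0)D_{x,x_0}^{1/2}$ weak*, and this limit must lie in the weak*-closed subalgebra $A_x\ot1_{x_0}$, furnishing $f(x)$ without any subnet extraction. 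The paper's route is slightly more direct and exploits the pointed structure explicitly, while yours is a clean compactness argument; both land on the same pair of lemmas to finish.
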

\begin{proof}
Let $L$ denote either $\b{L}_0(D)$ or $\b{Lip}_0(D)$. By Theorem \ref{t1.7}(i), $\Phi(L)\subseteq\ell_\infty(\c{A}^{\bar{(2)}}_{X^{(2)}})$
is isometric isomorphic to $L$. Thus to prove $L$ is a dual Banach space it is enough to show that $\Phi(L)$ is weak*-closed in
$\ell_\infty(\c{A}^{\bar{(2)}}_{X^{(2)}})$. Let $(f_i)_i$ be any net in $L$ such that $\xymatrix{\Phi(f_i)\ar[r]^-{\r{weak*}}&F}$ for an
$F\in\ell_\infty(\c{A}^{\bar{(2)}}_{X^{(2)}})$ and such that $(\Phi(f_i))_i$ is a $\|\cdot\|_\infty$-bounded net. If we show that
$F\in\Phi(L)$ then it follows from Krein-Smulian Theorem that $\Phi(L)$ is weak*-closed. So let's do that.
We have $\xymatrix{\Phi(f_i)(x,x_0)\ar[r]^-{\r{weak*}}&F(x,x_0)}$ for every $x$ with $x\neq x_0$. This implies that
$$\xymatrix{f_i(x)\ot1_{x_0}\ar[r]^-{\r{weak*}}&D_{x,x_0}^{1/2}F(x,x_0)D_{x,x_0}^{1/2}}.$$
Thus for every $x\neq x_0$ there exists $f(x)\in A_x$ such that $D_{x,x_0}^{1/2}F(x,x_0)D_{x,x_0}^{1/2}=f(x)\ot1_{x_0}$ and hence by setting $f(x_0)=0$
we have $\xymatrix{f_i(x)\ar[r]^-{\r{weak*}}&f(x)}$ for every $x$. By Lemma \ref{l3} we have $f\in L$ and by Lemma \ref{l4-25}, $\Phi(f)=F$.
Thus $F\in\Phi(L)$ and the proof is complete.
\end{proof}
\begin{theorem}\label{t4}
Let $(\c{A}_X,D)$ be a mof space such that $\c{A}_X$ is a field of von Neumann algebras and such that $D$ is bounded.
Then $\b{L}(D)$ and $\b{Lip}(D)$ are linearly homeomorphic to dual Banach spaces.
\end{theorem}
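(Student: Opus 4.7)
The plan is to reduce to the pointed case of Theorem \ref{t3} via the one-point extension $D^+$ of Theorem \ref{t1.5}, handling the boundedness hypothesis on $\|D\|_\infty$ by a preliminary rescaling.

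First, since $D$ is bounded, I would choose $r>0$ small enough that $\|rD\|_\infty\leq 2$ and verify that $rD$ is again a mof on $\c{A}_X$: the metric-state condition (i) is preserved because $(\mu_x\ot\mu_x)(rD(x,x))=r\cdot 0=0$, positivity and invertibility of $rD(x,x')$ for $x\neq x'$ are immediate, the flip axiom is preserved, and the triangle inequality (iv) scales on both sides by $r$. A direct computation shows $\|f\|_{rD}=r^{-1}\|f\|_D$ for every $f\in\b{F}(\c{A}_X)$, so $\b{L}(D)=\b{L}(rD)$ and $\b{Lip}(D)=\b{Lip}(rD)$ as vector spaces, with Lipschitz norms equivalent. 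Therefore, without loss of generality I may assume $\|D\|_\infty\leq 2$.

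Next, I would form the extension $D^+$ described in the paragraph preceding Theorem \ref{t1.5}, choosing the auxiliary algebra $A_\infty$ to be any nonzero von Neumann algebra (for instance $\mathbb{C}$). Then $\c{A}^+_{X^+}$ is a bundle of von Neumann algebras, and $(\c{A}^+_{X^+},D^+)$ is a pointed mof space with distinguished point $\infty$. Since the hypotheses of Theorem \ref{t3} are now satisfied, both $\b{L}_0(D^+)$ and $\b{Lip}_0(D^+)$ are dual Banach spaces under the norm $\|\cdot\|_{D^+}$.

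Now I would invoke Theorem \ref{t1.5}: the restriction map $f\mapsto f|_X$ is an isometric isomorphism from $\b{L}_0(D^+)$ onto $(\b{L}(D),\|\cdot\|_{\b{Lip'}})$, and likewise from $\b{Lip}_0(D^+)$ onto $(\b{Lip}(D),\|\cdot\|_{\b{Lip'}})$. Transporting the predual along these isometric isomorphisms shows that $(\b{L}(D),\|\cdot\|_{\b{Lip'}})$ and $(\b{Lip}(D),\|\cdot\|_{\b{Lip'}})$ are themselves dual Banach spaces. Finally, since $\|\cdot\|_{\b{Lip'}}=\max\{\|\cdot\|_D,\|\cdot\|_\infty\}$ and $\|\cdot\|_{\b{Lip}}=\|\cdot\|_D+\|\cdot\|_\infty$ are equivalent norms, the identity map is a linear homeomorphism between $(\b{L}(D),\|\cdot\|_{\b{Lip}})$ and the dual Banach space $(\b{L}(D),\|\cdot\|_{\b{Lip'}})$, and similarly for $\b{Lip}(D)$. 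This yields the claim.

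The only substantive step is the rescaling verification and the observation that $\b{L}(\cdot)$, $\b{Lip}(\cdot)$ as sets (and the relevant norm equivalences) are stable under scalar multiples of $D$; the remainder of the argument is a direct combination of the already-established Theorems \ref{t3} and \ref{t1.5}. The main conceptual point worth emphasizing is that one cannot expect $\|\cdot\|_{\b{Lip}}$ itself to be a dual norm, which is why the theorem is stated up to linear homeomorphism rather than isometry.
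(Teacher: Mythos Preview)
Your proposal is correct and follows essentially the same approach as the paper, which simply cites Theorems \ref{t3} and \ref{t1.5} together with the equivalence of $\|\cdot\|_{\b{Lip}}$ and $\|\cdot\|_{\b{Lip'}}$. You have in fact supplied two details the paper's one-line proof leaves implicit: the rescaling $D\mapsto rD$ needed to meet the hypothesis $\|D\|_\infty\leq 2$ of Theorem \ref{t1.5}, and the choice of $A_\infty$ as a von Neumann algebra so that Theorem \ref{t3} applies to $(\c{A}^+_{X^+},D^+)$.
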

\begin{proof}
It follows from Theorems \ref{t3}, \ref{t1.5} and equivalence of $\|\cdot\|_\b{Lip}$ and $\|\cdot\|_\b{Lip'}$.
\end{proof}
We end this section by a result on amenability of Banach algebras.
Recall that a Banach algebra is called `amenable' \cite{BadeCurtisDales1} if every bounded derivation of the
algebra to any dual Banach bimodule is inner.
\begin{theorem}\label{t5}
Let $(\c{A}_X,D)$ be a mof space such that $\c{A}_X$ is a field of von Neumann algebras. Suppose that $D$ is bounded but $D^{-1}$
is not bounded (out of the diagonal of $X\times X$). Then $\b{Lip}(D)$ is not amenable.
\end{theorem}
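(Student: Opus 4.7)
The strategy is to exhibit a bounded derivation from $\b{Lip}(D)$ into a dual Banach $\b{Lip}(D)$-bimodule that fails to be inner; by the definition of amenability recalled just above the theorem, this forces $\b{Lip}(D)$ not to be amenable. The obvious candidate is the de Leeuw-type map $\Phi$ of Theorem \ref{t1.7}, but interpreted into a slightly enlarged codomain so that it lands in a dual Banach bimodule.

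Explicitly, I would replace $\ell_\infty(\c{A}^{(2)}_{X^{(2)}})$ by $\ell_\infty(\c{A}^{\bar{(2)}}_{X^{(2)}})$, which is already a von Neumann algebra (hence a dual Banach space, being the dual of the $\ell_1$ of preduals) as noted at the start of Section \ref{s5-031305}. The bimodule formulas $(f\cdot F)_{x,x'}=(f(x)\ot 1_{x'})F_{x,x'}$ and $(F\cdot f)_{x,x'}=F_{x,x'}(1_x\ot f(x'))$ extend verbatim, since $f(x)\ot 1_{x'}$ and $1_x\ot f(x')$ lie in the spatial tensor product $A_x\ot A_{x'}\subseteq A_x\bar{\ot}A_{x'}$. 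For fixed $f\in\b{Lip}(D)$ the induced left and right actions have explicit predual adjoints (precomposition with multiplication by the fixed fiber elements on each predual factor), so they are weak*-continuous, making $\ell_\infty(\c{A}^{\bar{(2)}}_{X^{(2)}})$ a dual Banach $\b{Lip}(D)$-bimodule. Composing with the inclusion $\ell_\infty(\c{A}^{(2)}_{X^{(2)}})\hookrightarrow\ell_\infty(\c{A}^{\bar{(2)}}_{X^{(2)}})$ upgrades $\Phi$ to a bounded derivation into this dual bimodule.

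To show $\Phi$ is not inner, suppose for contradiction that some $F\in\ell_\infty(\c{A}^{\bar{(2)}}_{X^{(2)}})$ implements it. I would recycle the test-field trick from the proof of Theorem \ref{t1.7}(ii): for each $x\in X$ define $f_x:X\to[0,\infty)$ by $f_x(x'):=D^{\|\cdot\|}(x,x')$. Since $D^{\|\cdot\|}$ is an ordinary bounded metric on $X$, the triangle inequality gives $f_x\in\b{Lip}(D^{\|\cdot\|})$ with $\|f_x\|_{D^{\|\cdot\|}}\leq 1$, and Theorem \ref{t5-081316} then yields $\tilde{f_x}\in\b{Lip}(D)$. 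Computing directly from the definition of $\Phi$ one obtains $\Phi(\tilde{f_x})(x',x)=\|D_{x',x}\|D^{-1}_{x',x}$, while the inner-derivation formula gives $\Phi(\tilde{f_x})(x',x)=\|D_{x',x}\|F_{x',x}$ (the right-multiplication term vanishes because $f_x(x)=0$). Cancelling the positive scalar $\|D_{x',x}\|$ forces $F_{x',x}=D^{-1}_{x',x}$ for every $x'\neq x$, whence $\|F\|_\infty\geq\sup_{x\neq x'}\|D^{-1}_{x,x'}\|=\infty$, contradicting $F\in\ell_\infty(\c{A}^{\bar{(2)}}_{X^{(2)}})$. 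Thus $\Phi$ is a bounded non-inner derivation into a dual Banach bimodule, so $\b{Lip}(D)$ is not amenable.

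The only genuinely new ingredient beyond Theorem \ref{t1.7}(ii) is verifying that $\ell_\infty(\c{A}^{\bar{(2)}}_{X^{(2)}})$ really is a dual Banach $\b{Lip}(D)$-bimodule; I expect this to be the main (though modest) technical point, since the non-innerness calculation is a verbatim lift of the one already performed in Theorem \ref{t1.7}(ii), just carried out inside the slightly larger von Neumann-tensor fibers.
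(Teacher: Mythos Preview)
Your proposal is correct and follows essentially the same approach as the paper: both pass from $\ell_\infty(\c{A}^{(2)}_{X^{(2)}})$ to the von Neumann enlargement $\ell_\infty(\c{A}^{\bar{(2)}}_{X^{(2)}})$, observe that the latter is a dual Banach $\b{Lip}(D)$-bimodule via the canonical predual, and then invoke the same test-field computation as in Theorem \ref{t1.7}(ii) to show $\Phi$ is not inner there. The paper's proof is more terse (it simply cites ``the same proof of Theorem \ref{t1.7}(ii)''), while you spell out the weak*-continuity of the module actions and the cancellation argument explicitly, but the substance is identical.
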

\begin{proof}
There is a canonical Banach $\b{Lip}(D)$-bimodule structure on the predual of $\ell_\infty(\c{A}^{\bar{(2)}}_{X^{(2)}})$. Thus
$\ell_\infty(\c{A}^{\bar{(2)}}_{X^{(2)}})$ is a dual Banach $\b{Lip}(D)$-bimodule such that contains $\ell_\infty(\c{A}^{(2)}_{X^{(2)}})$
as a submodule. On other hand the same proof of Theorem \ref{t1.7}(ii) shows that $\Phi$ is not inner as a derivation from $\b{Lip}(D)$
into $\ell_\infty(\c{A}^{\bar{(2)}}_{X^{(2)}})$. Thus $\b{Lip}(D)$ is not amenable.
\end{proof}
\section{The associated continuous fields of C*-algebras}\label{s4-26}
Let $X$ be an ordinary metric space with a bounded metric $d$. Then it is well-known that $\b{Lip}(d)$ is uniformly dense in $\b{C}(X)$.
This fact leads us to the following definition.
\begin{definition}\label{d4-27}
Let $(\c{A}_X,D)$ be a mof space such that $\|D\|_\infty<\infty$. The C*-algebra of continuous bounded operator fields,
denoted by $\b{C}(D)$, is defined to be the uniform closure of $\b{Lip}(D)$ in $\ell_\infty(\c{A}_X)$.
\end{definition}
\begin{theorem}\label{t4-27}
Let $(\c{A}_X,D)$ be a mof space such that $\|D\|_\infty<\infty$.
\begin{enumerate}
\item[(i)] If $f\in\b{C}(D)$ then the function $x\mapsto\|f(x)\|$ is continuous on $(X,D^{\|\cdot\|})$.
\item[(ii)] Let $f$ be a bounded complex valued function on $X$. If $f$ is continuous w.r.t. $D^{\|\cdot\|}$
then the scalar valued operator field $\tilde{f}$ associated to $f$ and $\c{A}_X$, belongs to $\b{C}(D)$.
In particular, for any family $\{\mu_x\}$ of metric-states, if $f$ is continuous w.r.t. $D^\mu$ then $\tilde{f}\in\b{C}(D)$.
\end{enumerate}
\end{theorem}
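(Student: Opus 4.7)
For part (i), the plan is to exploit the Lipschitz property of the pointwise norm from Proposition \ref{p1}(i), together with the fact that uniform limits of continuous functions are continuous. Given $f\in\b{C}(D)$, by Definition \ref{d4-27} I may choose a sequence $(g_n)\subset\b{Lip}(D)$ with $\|g_n-f\|_\infty\to 0$. Since each $g_n$ commutes with $D$, Proposition \ref{p1}(i) says $x\mapsto\|g_n(x)\|$ is Lipschitz, and hence continuous, as a real-valued function on $(X,D^{\|\cdot\|})$. The reverse triangle inequality $|\,\|g_n(x)\|-\|f(x)\|\,|\leq\|g_n-f\|_\infty$ then gives uniform convergence $\|g_n(\cdot)\|\to\|f(\cdot)\|$ on $X$, so the limit $x\mapsto\|f(x)\|$ is continuous on $(X,D^{\|\cdot\|})$.

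For part (ii), my plan is to factor through the scalar case via Theorem \ref{t5-081316}. The assignment $g\mapsto\tilde{g}$ is a $\|\cdot\|_\infty$-isometry from the bounded functions on $X$ into $\ell_\infty(\c{A}_X)$, and by Theorem \ref{t5-081316} it carries $\b{Lip}(D^{\|\cdot\|})$ into $\b{Lip}(D)$. Consequently it sends the uniform closure of $\b{Lip}(D^{\|\cdot\|})$ in the bounded scalar functions on $X$ into the uniform closure of $\b{Lip}(D)$ in $\ell_\infty(\c{A}_X)$, which is exactly $\b{C}(D)$ by Definition \ref{d4-27}. So part (ii) reduces to the scalar statement already cited at the start of the section: every bounded continuous function on the metric space $(X,D^{\|\cdot\|})$ is a uniform limit of bounded Lipschitz functions on $(X,D^{\|\cdot\|})$. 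For the ``In particular'' assertion I observe that $D^\mu\leq D^{\|\cdot\|}$ forces the $D^{\|\cdot\|}$-topology to be finer than the $D^\mu$-topology, so continuity w.r.t.\ $D^\mu$ implies continuity w.r.t.\ $D^{\|\cdot\|}$, which reduces to the first clause.

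The main obstacle, as I see it, is invoking the scalar density fact in part (ii). On a compact metric space it is immediate from Stone--Weierstrass, and for bounded \emph{uniformly} continuous functions on any bounded metric space it follows from the standard McShane inf-convolution $f_n(x):=\inf_{y}\{f(y)+n\,D^{\|\cdot\|}(x,y)\}$, which produces Lipschitz approximants of Lipschitz constant $n$ converging uniformly to $f$ whenever $f$ is uniformly continuous. For a merely continuous bounded function on a non-compact metric space, however, uniform approximation by Lipschitz functions typically fails (e.g.\ $x\mapsto\sin(x^2)$ on $\mathbb{R}$). So the underlying scalar assertion likely carries the implicit hypothesis that $(X,D^{\|\cdot\|})$ is such that bounded continuous equals bounded uniformly continuous, or one should read ``continuous'' in (ii) as ``uniformly continuous''. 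Once the scalar density is granted, the passage $(f_n)\mapsto(\tilde{f_n})$ is automatic and the proof concludes.
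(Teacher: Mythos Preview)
Your argument is correct and follows exactly the route the paper takes: its entire proof reads ``(i) and (ii) follow respectively from (i) and (ii) of Proposition~\ref{p1}'', and you have simply spelled out the closure and density steps that this entails. Your caveat about part~(ii) is well placed and in fact sharper than the paper: the opening line of Section~\ref{s4-26} asserts as ``well-known'' that $\b{Lip}(d)$ is uniformly dense in $\b{C}(X)$ for any bounded metric $d$, but as you note this can fail without compactness or uniform continuity (and a $\rho$-Lipschitz function for $\rho=\min(d,1)$ is automatically $d$-Lipschitz, so bounding the metric does not rescue the claim). The paper does not address this; in its only subsequent use, Theorem~\ref{t4-271837}, compactness of $(X,D^{\|\cdot\|})$ is assumed and the issue disappears.
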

\begin{proof}
(i) and (ii) follow respectively from (i) and (ii) of Proposition \ref{p1}.
\end{proof}
The notion of `continuous field of C*-algebra' is due to Dixmier \cite{Dixmier1}. Here we consider his definition with a little modification.
\begin{definition}\label{d4-271836}
Let $X$ be a compact Hausdorff topological space and $\c{A}_X$ be a bundle of C*-algebras. Then a C*-sub algebra $\f{A}$ of
$\ell_\infty(\c{A}_X)$ is called a continuous field of C*-algebras w.r.t. $\c{A}_X$ if the following three conditions are satisfied.
\begin{enumerate}
\item[(i)] The unit of $\ell_\infty(\c{A}_X)$ belongs to $\f{A}$.
\item[(ii)] For every $f\in\f{A}$ the function $x\mapsto\|f(x)\|$ is continuous on $X$.
\item[(iii)] Suppose that $g\in\ell_\infty(\c{A}_X)$ has the following property. For every $\epsilon>0$ and every $x$ there exist open
subset $U_{x,\epsilon}$ of $X$ and operator field $f_{x,\epsilon}\in\f{A}$ such that $\sup_{y\in U_{x,\epsilon}}\|g(y)-f_{x,\epsilon}(y)\|<\epsilon$.
Then $g\in\f{A}$.
\end{enumerate}
\end{definition}
The main result of this section is as follows.
\begin{theorem}\label{t4-271837}
Let $(\c{A}_X,D)$ be a mof space such that $(X,D^{\|\cdot\|})$ is a compact metric space.
Then $\b{C}(D)$ is a continuous field of C*-algebras w.r.t. $\c{A}_X$.
\end{theorem}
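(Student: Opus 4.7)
The plan is to verify each of the three conditions of Definition \ref{d4-271836} separately. Conditions (i) and (ii) will follow immediately from results already established; the substantive work lies in (iii), which I will handle by a Lipschitz partition of unity argument on the compact metric space $(X,D^{\|\cdot\|})$.

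For (i), the constant function $1$ on $X$ is trivially Lipschitz with respect to $D^{\|\cdot\|}$, hence by Theorem \ref{t5-081316} its scalar valued lift $\tilde{1}$, which is the unit of $\ell_\infty(\c{A}_X)$, lies in $\b{Lip}(D)\subseteq\b{C}(D)$. For (ii), this is precisely Theorem \ref{t4-27}(i) applied to an arbitrary $f\in\b{C}(D)$.

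For (iii), suppose $g\in\ell_\infty(\c{A}_X)$ has the local approximation property. Fix $\epsilon>0$. For each $x\in X$ choose an open neighborhood $U_{x,\epsilon}\ni x$ and $f_{x,\epsilon}\in\b{C}(D)$ with $\sup_{y\in U_{x,\epsilon}}\|g(y)-f_{x,\epsilon}(y)\|<\epsilon$. By compactness of $(X,D^{\|\cdot\|})$ extract a finite subcover $U_1,\dots,U_n$ with corresponding fields $f_1,\dots,f_n$. I then construct a Lipschitz partition of unity subordinate to this cover by the standard device: let $\rho_i(y):=D^{\|\cdot\|}(y,X\setminus U_i)$, which is $1$-Lipschitz with respect to $D^{\|\cdot\|}$ and vanishes precisely off $U_i$, and put $\phi_i:=\rho_i/\sum_{j=1}^{n}\rho_j$. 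Since the $U_i$ cover $X$, the denominator is bounded away from zero and Lipschitz, whence each $\phi_i$ is Lipschitz on $(X,D^{\|\cdot\|})$, is supported inside $U_i$, and $\sum_i\phi_i\equiv 1$.

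Set $h:=\sum_{i=1}^{n}\tilde{\phi_i}f_i$. By Theorem \ref{t5-081316} each $\tilde{\phi_i}$ lies in $\b{Lip}(D)\subseteq\b{C}(D)$, and since $\b{C}(D)$ is a C*-algebra by Definition \ref{d4-27}, we have $h\in\b{C}(D)$. For every $y\in X$,
\begin{equation*}
\|g(y)-h(y)\|=\Big\|\sum_{i=1}^{n}\phi_i(y)\bigl(g(y)-f_i(y)\bigr)\Big\|\leq\sum_{i=1}^{n}\phi_i(y)\|g(y)-f_i(y)\|<\epsilon,
\end{equation*}
because any term with $\phi_i(y)\neq 0$ forces $y\in U_i$, where $\|g(y)-f_i(y)\|<\epsilon$. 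Hence $g$ is uniformly approximable by elements of $\b{C}(D)$, and as $\b{C}(D)$ is uniformly closed in $\ell_\infty(\c{A}_X)$ (being by definition a uniform closure), we conclude $g\in\b{C}(D)$. The main obstacle is transferring the partition of unity from the base space to operator fields; Theorem \ref{t5-081316} resolves this cleanly by lifting scalar Lipschitz functions to elements of $\b{Lip}(D)$, after which the classical real-analysis argument transfers without modification.
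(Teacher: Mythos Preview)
Your proof is correct and follows essentially the same partition-of-unity argument as the paper's own proof. The only cosmetic difference is that you build an explicit Lipschitz partition of unity and lift via Theorem~\ref{t5-081316}, whereas the paper invokes an arbitrary continuous partition of unity and lifts via Theorem~\ref{t4-27}(ii); both routes land in $\b{C}(D)$ and the remainder is identical.
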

\begin{proof}
It follows from Theorem \ref{t4-27} that $\b{C}(D)$ satisfies the first two conditions of Definition \ref{d4-271836}.
Suppose that $g\in\ell_\infty(\c{A}_X)$ has the property stated in the third condition. We must show that $g\in\b{C}(D)$.
Let $\epsilon>0$ be arbitrary and fixed. Since $X$ is compact there are finite open cover $U_1,\ldots,U_n$ of $X$ and operator fields
$f_1,\ldots,f_n\in\b{C}(D)$ such that for every $x\in U_i$, $\|g(x)-f_i(x)\|<\epsilon$. Again since $X$ is compact and Hausdorff
there is a partition of unity $h_1,\ldots,h_n$ of $X$ subordinate to $U_1,\ldots,U_n$,
that is, for every $i$, $h_i:X\to[0,1]$ is a continuous function such that $\r{supp}(h_i)\subseteq U_i$,
and for every $x\in X$, $\sum_{i}h_i(x)=1$. By Theorem \ref{t4-27}(ii), $h_if_i\in\b{C}(D)$ and hence $f:=\sum_ih_if_i\in\b{C}(D)$.
It is easily checked that $\|g-f\|_\infty\leq\epsilon$. This shows that $g\in\b{C}(D)$.
\end{proof}
It follows from Theorem \ref{t5-081806} that if $D$ is central, bounded, and non scalar valued then $\b{C}(D)$ contains non scalar valued
continuous operator fields. Thus we have introduced a non trivial class of continuous fields of C*-algebras.
\bibliographystyle{amsplain}

\end{document}